\renewenvironment{proof}[1][\proofname]{%
	\par\pushQED{\qed}\normalfont%
	\topsep6\p@\@plus6\p@\relax
	\trivlist\item[\hskip\labelsep\bfseries#1\@addpunct{.}]%
	\ignorespaces
}{%
	\popQED\endtrivlist\@endpefalse
}
\newtheorem{definition}{Definition}
\newtheorem{corollary}{Corollary}[section]
\newtheorem{theorem}{Theorem}[section]
\newtheorem{lemma}{Lemma}[section]
\newtheorem{remark}{Remark}[section]
\newtheorem{proposition}{Proposition}[section]
\newtheorem{example}{Example}[section]
\numberwithin{equation}{section}
\begin{document}
	\setcounter{page}{1}

	\thispagestyle{empty}
	\markboth{}{}

	\pagestyle{myheadings}
	\markboth{}{ }
	
	\date{}
	
	
	\noindent  
	
	\vspace{.1in}
	
	{\baselineskip 20truept
		
		\begin{center}
			{\Large {\bf On General Weighted Extropy of Ranked Set Sampling }} \footnote{\noindent
				{\bf Nitin Gupta} E-mail: nitin.gupta@maths.iitkgp.ac.in\\
				{\bf Santosh Kumar Chaudhary } E-mail: skchaudhary1994@kgpian.iitkgp.ac.in}\\
			
		\end{center}
		
		\vspace{.1in}
		
		\begin{center}
			{\large {\bf Nitin Gupta and  Santosh Kumar Chaudhary}}\\
			{\large {\it Department of Mathematics, Indian Institute of Technology Kharagpur, West Bengal 721302, India }}
			\\
		\end{center}
	}
	\vspace{.1in}
	\baselineskip 12truept

	
	\begin{center}
		{\bf \large Abstract}\\
	\end{center}
	In the past six years, a considerable attention has been given to the extropy measure proposed by Lad et al. (2015). Weighted Extropy of Ranked Set Sampling was studied and compared with simple random sampling by Qiu et al. (2022). The general weighted extropy and some results related to it are introduced in this paper. We provide general weighted extropy of ranked set sampling. We also studied characterization results, stochastic comparison and monotone properties of general weighted extropy.\\
	\\
	\textbf{Keyword:} Extropy, General Weighted Extropy, Ranked Set Sampling, Stochastic Order, Weighted Extropy.\\
	\newline
	\noindent  {\bf Mathematical Subject Classification}: {\it 62B10, 62D05}
	\section{Introduction}
	McIntyre (1952) introduced the ranked set sampling (RSS)  to estimate mean pasture yields. The RSS is a better sampling strategy for estimating population mean than simple random sampling (SRS).
	
	Let $X$ has the probability density function (pdf) $f$, the cumulative distribution function (cdf) $F$ and the survival function (sf) $\bar F=1-F$.  Assume that $\mathbf{X}_{SRS}=\{X_i:\ i=1,\ldots,n\}$ denote a simple random sample of size $n$ from $X$. Here we first explain the one-cycle RSS: randomly select $n^2$ units from $X$, then these units are randomly allocated into $n$ sets, each of size $n$. Then rank those $n$ units in each set with respect to the variable of interest. From the first set, select the smallest ranked unit; then from the second set select the second smallest ranked unit; continue the process until the $n$th smallest ranked unit is selected from the last set. Now repeat the entire sampling procedure $m$ times, to obtain a sample of size $mn$. The obtained sample is called the RSS from the underlying distribution $F$. We assume $m=1$ throughout the manuscript without loss of generality.  Let $\mathbf{X}_{RSS}^{(n)}=\{X_{(i:n)i},\ i=1,\ldots,n\}$ denote the  ranked-set sample, where $X_{(i:n)i}$ represent the $i$-th order statistics from the $i$-th sample with sample size $n$.
	
    Entropy was first introduced by Shannon (1948) and is relevant to many disciplines, including information theory, physics, probability and statistics, economics, communication theory, etc. The average level of uncertainty related to the results of the random experiment is measured by entropy.
	
	Entropy measures the average level of uncertainty related to the results of the random experiment. The differential form of Shannon entropy  is defined  as:
	\begin{equation*}
	\label{1eq2}
		H(X) = - {\int_{-\infty}^{\infty}{f(x) \ln \left( f(x)\right)dx}}=E\left(- \ln f(X)\right).
	\end{equation*} 
	This measure is used in many different applications in order statistics and record statistics; see, for instance, Bratpour et al. (2007), Raqab and Awad (2000, 2001), Zarezadeh and Asadi (2010), Abo-Eleneen (2011),  Qiu and Jia (2018a,2018b) and Tahmasebi et al. (2016).
	
	Recently an alternative measure of uncertainty, called extropy
	has gained importance. Lad et al. (2015) defined the
	compliment dual of the Shannon entropy called extropy as		
	\begin{equation}\label{extropy}
	J(X)=-\frac{1}{2} \int_{-\infty}^{\infty}f^2(x)dx=-\frac{1}{2}E\left(f(X)\right).
	\end{equation}

    Qiu (2017) studies the characterization results, lower bounds, monotone properties and statistical applications concerning extropy of order statistics and record values.  Balakrishnan et al. (2020) and Bansal and Gupta (2021) independently introduced the weighted extropy as
	\begin{equation}\label{wtdextropy}
	J^x(X)=-\frac{1}{2}\int_{-\infty}^{\infty}xf^2(x)dx
	\end{equation}
	
	Balakrishnan et al. (2020) studied characterization results and bounds 
	for weighted versions of extropy, residual extropy, past extropy, bivariate extropy
	and bivariate weighted extropy, whereas Bansal and Gupta (2021) discussed the
	results for weighted extropy and weighted residual extropy for order statistics and
	k-record values. Here we introduce the  generalized weighted extropy with weight $w_1(x)\geq 0$ as
	\begin{align*}
		J^{w_1}(X)&=-\frac{1}{2} \int_{-\infty}^{\infty}w_1(x) f^2(x)dx\\
		&=-\frac{1}{2}E(\Lambda_X^{w_1}(U)),
	\end{align*}
	where $\Lambda_X^{w_1}(u)=w_1(F^{-1}(u))f(F^{-1}(u))$ and $U$ is uniformaly distributed random variable on $(0,1)$, i.e., $U\sim$ Uniform$(0,1)$.
	
	Bansal and Gupta (2021) used following example in which extropy of random variable X and Y are same but weighted extropy are different. We find that general weighted extropies  $J^{w_1}(X)$ and $J^{w_1}(Y)$ are also different when $w_1(x)=x^m, \ m>0, \ x>0.$ Let X and Y be two random variables with the pdf's 
	\begin{eqnarray*}
		f_{X}(x)=
		\begin{cases}
			2x, \hspace{4mm} 0<x<1\\
			0, \hspace{5mm} otherwise
		\end{cases}~~~~~~~~~~~~~~~~~
		f_{Y}(x)=
		\begin{cases}
			2(1-x),\hspace{4mm} 0<x<1\\
			0,\hspace{6mm} otherwise
		\end{cases}
	\end{eqnarray*}
We get $J(X)=J(Y)=-2/3$ but  $J^x(X)=-1/2$ and $J^x(Y)=-1/6.$ Let us consider $w_1(x)=x^m,  \ m>0, \ x>0 $ then 
\begin{align*}
	J^{w_1}(X)&=-\frac{2}{m+3} \\
	J^{w_1}(Y)&=-2 \left( \frac{1}{m+1} - \frac{2}{m+2}+ \frac{1}{m+3}\right) 
\end{align*}
    
  As a result, while extropies in this example are same, weighted extropies and general weighted extropies are different. Therefore, weighted extropies and general weighted extropies can be used as uncertainty measures. This shift dependent measure takes into consideration the values of the random variable, unlike the extropy defined in (\ref{extropy}). 
 
    Qiu et al. (2022) have given a representation for the weighted extropy of ranked set sampling in terms of quantile and density-quantile functions. Then  provided some related results including monotone properties, stochastic orders, characterizations and sharp bounds. Moreover, They have shown how the weighted extropy of ranked set sampling compares with its counterpart of simple random sampling.
 
     In this paper, we also study the monotone and stochastic properties of  general weighted extropy of RSS data. Stochastic comparison results are obtained by taking different weights for the extropy of RSS.  Comparison between extropy of RSS  and SRS data is provided. Some characterization results are obtained.  We also study the monotone properties for the general weighted extropy of RSS data. These results generalize some of the results available in the literature.
	
	
	\section{Result on general weighted extropies}
	Before providing results, let us review definitions from literature (see, Shaked and Shanthikumar (2007) of some terminology that are useful.
	
	\begin{definition}
		Let a random variable $X$ have  pdf $f(x)$, cdf $F(x)$ and sf $\bar{F}(x) = 1-F(x).$ Let $l_X = inf\{x \in \mathbb{R} : F(x) > 0\},\   u_X = sup\{x \in \mathbb{R} : F(x) < 1\}$ and	 $S_X= (l_X,u_X),$ where -$\infty \le l_X \le u_X \le \infty$.\\
		\\
		(i) $X$ is said to be log-concave (log-convex) if $\{x \in \mathbb{R} : f(x) > 0\} = S_X$
		and $ln(f(x))$ is concave (convex) on $S _X$.\\
		\\
		(ii) $X$ is said to have increasing (decreasing) failure rate IFR (DFR) if F(x) is log-concave		(log-convex) on  $S_X$.\\
		\\
		(iii) $X$ is said to have decreasing (increasing) reverse failure rate DRFR (IRFR) if F(x) is	log-concave (log-convex) on $S_X$.\\
		\\
		(iv) $X$ is said to have decreasing (increasing) mean residual life DMRL (IMRL) if $\int_{x}^{u_X} F(t)dt$		is log-concave (log-convex) on $S_X$.\\
		\\
		(v) $X$ is said to have increasing (decreasing) mean inactivity time (IMIT (DMIT)) if $\int_{l_X}^{x}	F(t)dt$
		is log-concave (log-convex) on $S_X.$
		
	\end{definition}

		\begin{definition}
		Let $X$ be a random variable with  pdf $f(x)$, cdf $F(x)$ and sf $\bar{F}(x)=1-F(x).$ Let $l_X = inf\{x \in \mathbb{R} : F(x) > 0\}$, $u_X = sup\{x \in \mathbb{R} : F(x) < 1\}$
		and $S_X = (l_X, u_X).$ Similarly, let $Y$ be a random variable with  pdf $g(x)$, cdf $G(x)$ and sf $\bar{G}(x)= 1-G(x).$ Let $l_Y=inf\{x \in R:G(x)> 0\},\  u_Y = sup\{x \in R :G(x) < 1\}$ and $S_Y = (l_Y,u_Y )$. If $l_X \ge 0$ and $l_Y \ge 0$, then \\
		\\
		(i) $X$ is said to be smaller than $Y$ in usual stochastic (st) ordering		$(X \le_{st} Y )$ if $\bar{F}(x) \le \bar{G}(x)$, for every -$\infty< x <\infty.$\\
		\\
		(ii) $X$ is said to be smaller than $Y$ in the likelihood ratio (lr) ordering $(X \le_{lr} Y )$ if
		$g(x)f(y) \le  f(x)g(y)$, whenever $-\infty < x < y <\infty$.\\
		\\
		(iv) $X$ is said to be smaller than $Y$ in the dispersive ordering ($X \le_{disp} Y)$ if $G^{-1}F(x)-x$ is increasing in $x \ge 0$.\\
		\\
		(v) $X$ is said to be smaller than $Y$ in the hazard rate ordering $(X \le_{hr} Y)$ if $\frac{\bar{G}(x)}{\bar{F}(x)}$  is increasing in $x\in S_X \cap S_Y.$
		
	\end{definition}

	\begin{theorem}\label{thm com en1}
		Let $X$  and $Y$ be nonnegative random variables with pdf's f and g, cdf's F and G, respectively having $u_X=u_Y<\infty$.\\
		\\
		(a) If $w_1$ is increasing, $w_1(x)\geq w_2(x)$ and $X\le_{disp} Y$, then $J^{w_1}(X)\le J^{w_2}(Y)$.\\
		(b)   If $w_1$ is increasing, $w_1(x)\leq w_2(x)$ and $X\ge_{disp} Y$, then $J^{w_1}(X)\ge J^{w_2}(Y)$.
	\end{theorem}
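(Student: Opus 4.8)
The plan is to reduce both inequalities to a pointwise comparison of integrands via the quantile representation $J^{w}(Z)=-\frac{1}{2}E\big(\Lambda_Z^{w}(U)\big)$ with $\Lambda_Z^{w}(u)=w(F_Z^{-1}(u))\,f_Z(F_Z^{-1}(u))$ recorded in the Introduction. Writing $\Lambda_X^{w_1}(u)=w_1(F^{-1}(u))f(F^{-1}(u))$ and $\Lambda_Y^{w_2}(u)=w_2(G^{-1}(u))g(G^{-1}(u))$, it suffices to show $\Lambda_X^{w_1}(u)\ge\Lambda_Y^{w_2}(u)$ for every $u\in(0,1)$ in case (a), and the reverse inequality in case (b); integrating over $(0,1)$ in the extended sense and multiplying by $-\frac{1}{2}$ then gives the claim, so no separate integrability discussion is needed. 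Because the weights and the density-quantile factors are all nonnegative, I would compare the weight factor and the density-quantile factor separately and then multiply.

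Two properties of the dispersive order carry the argument. First, the classical density-quantile characterization of $\le_{disp}$ (see Shaked and Shanthikumar (2007)): $X\le_{disp}Y$ is equivalent to $f(F^{-1}(u))\ge g(G^{-1}(u))$ for all $u\in(0,1)$, since $\tfrac{d}{du}F^{-1}(u)=1/f(F^{-1}(u))$ and $\le_{disp}$ says exactly that the quantile-function increments of $X$ are dominated by those of $Y$. Second -- and this is where $u_X=u_Y<\infty$ is used -- I would deduce the pointwise ordering of the quantile functions themselves: by definition $G^{-1}F(x)-x$ is increasing in $x\in S_X$, and letting $x\uparrow u_X$ gives $F(x)\uparrow 1$ and hence $G^{-1}F(x)\uparrow G^{-1}(1^{-})=u_Y=u_X$, so the increasing function $G^{-1}F(x)-x$ has right-endpoint limit $0$ and is therefore $\le 0$ on all of $S_X$. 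Substituting $x=F^{-1}(u)$ and using continuity of $F$ yields $G^{-1}(u)\le F^{-1}(u)$ for all $u\in(0,1)$.

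With these in hand, part (a) is immediate: since $w_1$ is increasing and $F^{-1}(u)\ge G^{-1}(u)$, we get $w_1(F^{-1}(u))\ge w_1(G^{-1}(u))\ge w_2(G^{-1}(u))$, the last step by $w_1\ge w_2$; multiplying by the density-quantile inequality $f(F^{-1}(u))\ge g(G^{-1}(u))\ge 0$ gives $\Lambda_X^{w_1}(u)\ge\Lambda_Y^{w_2}(u)$, hence $E(\Lambda_X^{w_1}(U))\ge E(\Lambda_Y^{w_2}(U))$ and $J^{w_1}(X)\le J^{w_2}(Y)$. Part (b) is the mirror image: $X\ge_{disp}Y$ with $u_X=u_Y<\infty$ gives, by the same reasoning with $X$ and $Y$ interchanged, $F^{-1}(u)\le G^{-1}(u)$ and $f(F^{-1}(u))\le g(G^{-1}(u))$, so with $w_1$ increasing and $w_1\le w_2$ we obtain $w_1(F^{-1}(u))\le w_1(G^{-1}(u))\le w_2(G^{-1}(u))$, whence $\Lambda_X^{w_1}(u)\le\Lambda_Y^{w_2}(u)$ and $J^{w_1}(X)\ge J^{w_2}(Y)$.

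The only genuinely delicate step is the second one -- converting $X\le_{disp}Y$ into the pointwise inequality $G^{-1}(u)\le F^{-1}(u)$ -- since the dispersive order by itself controls only increments of quantile functions, not their levels; the common finite right endpoint is precisely what pins down the additive constant and lets one conclude. Everything after that is monotonicity and sign bookkeeping.
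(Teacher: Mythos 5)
Your proof is correct and follows essentially the same route as the paper: both arguments combine the density-quantile characterization $f(F^{-1}(u))\ge g(G^{-1}(u))$ of $X\le_{disp}Y$ with the quantile ordering $F^{-1}(u)\ge G^{-1}(u)$ to get the pointwise bound $\Lambda_X^{w_1}(u)\ge\Lambda_Y^{w_2}(u)$. The only difference is that where the paper obtains $X\ge_{st}Y$ by citing Theorem 3.B.13(b) of Shaked and Shanthikumar (2007), you prove that implication directly from the common finite right endpoint $u_X=u_Y<\infty$, which is a correct inline derivation of the same lemma.
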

	\begin{proof}
		(a) Since $X\le_{disp} Y$, therefore we have $f(F^{-1}(u))\ge g(G^{-1}(u))$ for $u \in (0,1)$. Then using Theorem 3.B.13(b) of Shaked and Shanthikumar (2007), $X\le_{disp} Y$ implies that $X\ge_{st} Y$. Hence $F^{-1}(u) \ge G^{-1}(u)$ for all $u\in (0,1)$. Since $w_1$ is increasing and $w_1(x)\geq w_2(x)$ , then $w_1(F^{-1}(u)) \ge w_1(G^{-1}(u))\ge w_2(G^{-1}(u))$. 
		Hence 
		\begin{align}\label{stor0}
			\Lambda_X^{w_1} (u)&=w_1(F^{-1}(u)) f(F^{-1}(u))\nonumber\\
			&\ge w_2(G^{-1}(u)) g(G^{-1}(u))\nonumber\\
			=&\Lambda_Y^{w_2} (u).
		\end{align}
		Now
		\begin{align*}
			J^{w_1}(X)&= -\frac{1}{2}E\left(\Lambda_X^{w_1} (U)\right)\\
			&\le -\frac{1}{2}E\left(\Lambda_Y^{w_2} (U)\right)\\
			&=J^{w_2}(Y),
		\end{align*}
		the inequality here follows using (\ref{stor0}). Hence the result. \\
		(b) On similar arguments as in part (a), result follows. \hfill $\blacksquare$
	\end{proof}\
	If we take $w_1(x)=w_2(x)$ in above theorem, then we have following corollary.
	
	\begin{corollary}\label{cor com en1}
		Let $X$  and $Y$ be nonnegative random variables with pdf's f and g, cdf's F and G, respectively having $u_X=u_Y<\infty$. Let $w_1$ is increasing. \\
		(a) If  $X\le_{disp} Y$, then $J^{w_1}(X)\le J^{w_1}(Y)$.\\
		(b) If  $X\ge_{disp} Y$, then $J^{w_1}(X)\ge J^{w_1}(Y)$.
	\end{corollary}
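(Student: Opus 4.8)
The plan is to read this corollary off directly from Theorem \ref{thm com en1} by collapsing the two weight functions into one. For part (a), I would apply part (a) of Theorem \ref{thm com en1} with the choice $w_2 = w_1$: the hypothesis that $w_1$ is increasing is exactly what is assumed here, the requirement $w_1(x) \ge w_2(x)$ holds trivially as an equality, and $X \le_{disp} Y$ is assumed; the conclusion $J^{w_1}(X) \le J^{w_2}(Y)$ then reads $J^{w_1}(X) \le J^{w_1}(Y)$ since $w_2 = w_1$. Part (b) is obtained in the same way from part (b) of Theorem \ref{thm com en1}, again setting $w_2 = w_1$ so that the hypothesis $w_1(x) \le w_2(x)$ is automatic.

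If a self-contained argument is preferred, one can simply rerun the proof of Theorem \ref{thm com en1} with a single weight. From $X \le_{disp} Y$ we get $f(F^{-1}(u)) \ge g(G^{-1}(u))$ for every $u \in (0,1)$, and, via Theorem 3.B.13(b) of Shaked and Shanthikumar (2007), also $X \ge_{st} Y$, hence $F^{-1}(u) \ge G^{-1}(u)$ for all $u \in (0,1)$. Since $w_1$ is nonnegative and increasing, this gives the pointwise bound $\Lambda_X^{w_1}(u) = w_1(F^{-1}(u)) f(F^{-1}(u)) \ge w_1(G^{-1}(u)) g(G^{-1}(u)) = \Lambda_Y^{w_1}(u)$. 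Taking expectations with respect to $U \sim \mathrm{Uniform}(0,1)$ and multiplying by $-\frac{1}{2}$ flips the inequality, yielding $J^{w_1}(X) \le J^{w_1}(Y)$; part (b) follows symmetrically from the reversed orderings.

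There is essentially no obstacle here, since the statement is a genuine corollary of the theorem just proved. The only point requiring minor care is the sign reversal built into the factor $-\frac{1}{2}$ in the definition of $J^{w_1}$: the pointwise domination $\Lambda_X^{w_1} \ge \Lambda_Y^{w_1}$ must be correctly translated into $J^{w_1}(X) \le J^{w_1}(Y)$ (and likewise in part (b)), and one should note that the common upper endpoint condition $u_X = u_Y < \infty$ is used only insofar as it is already built into the hypotheses of Theorem \ref{thm com en1}.
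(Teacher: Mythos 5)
Your proposal is correct and matches the paper exactly: the paper obtains this corollary by setting $w_1 = w_2$ in Theorem \ref{thm com en1}, which is precisely your first paragraph, and your self-contained rerun of the argument is the same chain of inequalities used in that theorem's proof.
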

 \noindent	The pdf of $X_{(i:n)i}$   is 
	\begin{align*}
		f_{i:n}(x)=\frac{n!}{(i-1)! (n-i)!} F^{i-1}(x) \bar{F}^{n-i}(x)f(x) ,\hspace{3mm} -\infty<x<\infty.
	\end{align*}
	Let
	\begin{align*}
		\phi_{2i-1:2n-1}(u)= \frac{(2n-1)!}{(2i-2)! (2n-2i)!} u^{2i-2} (1-u)^{2n-2i},\hspace{3mm} 0<u<1.
	\end{align*}
    Then $\phi_{2i-1:2n-1}(u)$ represents the pdf of a beta distributed random variable with parameter (2i-1), and (2n-2i+1), we denote this random variable by $B_{2i-1:2n-1}$.
	\begin{theorem}
		Consider a random sample of size $n$ as $X_1,\cdots,X_n$  from a IRFR distribution $F$.\\
		(a) If $w_1$ is increasing, then $J^{w_1}(X_{i:n})$ is decreasing in $i$ for fixed $n,\ 1\le i \le n$.\\
		(b) If $w_1$ is decreasing, then $J^{w_1}(X_{i:n})$ is increasing in $n$ for fixed $i,\ 1\le i \le n$.
	\end{theorem}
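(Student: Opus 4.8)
The plan is to move to the quantile scale, write $-2J^{w_1}(X_{i:n})$ as a product of two factors each monotone in the varying index, and locate the monotonicity in the right place. The key use of the hypothesis: with $q(u)=f(F^{-1}(u))$, $F$ is IRFR precisely when the reversed hazard rate $f/F$ is increasing, i.e.\ $q(u)/u$ (the reversed hazard rate read along $u\mapsto F^{-1}(u)$) is increasing on $(0,1)$. Since $w_1$ and $F^{-1}$ are increasing, $w_1(F^{-1}(u))$ is increasing, hence
\[
\frac{\Lambda_X^{w_1}(u)}{u}=w_1(F^{-1}(u))\,\frac{q(u)}{u}
\]
is a product of nonnegative increasing functions, so it is increasing on $(0,1)$ (and so is $\Lambda_X^{w_1}$).

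Putting $u=F(x)$ in $J^{w_1}(X_{i:n})=-\tfrac12\int w_1(x)f_{i:n}^2(x)\,dx$ and using the stated form of $f_{i:n}$ gives $-2J^{w_1}(X_{i:n})=\big(\tfrac{n!}{(i-1)!(n-i)!}\big)^{2}\int_0^1 \Lambda_X^{w_1}(u)\,u^{2i-2}(1-u)^{2n-2i}\,du$. I then peel off one power of $u$: the integrand becomes $\big(\Lambda_X^{w_1}(u)/u\big)\cdot u^{2i-1}(1-u)^{2n-2i}$, and $u^{2i-1}(1-u)^{2n-2i}$ is a constant times the $\mathrm{beta}(2i,2n-2i+1)$ density, i.e.\ the density of $B_{2i:2n}$, the $2i$-th order statistic of $2n$ i.i.d.\ uniforms, so
\[
-2J^{w_1}(X_{i:n})=\widehat C_{i,n}\;E\!\Big[\tfrac{\Lambda_X^{w_1}(B_{2i:2n})}{B_{2i:2n}}\Big],\qquad
\widehat C_{i,n}=\Big(\tfrac{n!}{(i-1)!(n-i)!}\Big)^{2}\tfrac{(2i-1)!\,(2n-2i)!}{(2n)!}.
\]
For (a) both factors are nonnegative and increasing in $i$: the expectation because $\Lambda_X^{w_1}(u)/u$ is increasing and $B_{2i:2n}\le_{lr}B_{2i+2:2n}$ (standard for uniform order statistics, the ratio of consecutive densities being $\propto u/(1-u)$), and the constant because $\widehat C_{i+1,n}/\widehat C_{i,n}=\tfrac{(n-i)(2i+1)}{\,i(2n-2i-1)\,}$ with $(n-i)(2i+1)-i(2n-2i-1)=n>0$. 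A product of nonnegative increasing sequences is increasing, so $-2J^{w_1}(X_{i:n})$ increases in $i$, i.e.\ $J^{w_1}(X_{i:n})$ decreases in $i$.

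For (b) I use the same representation with $i$ fixed and $n$ varying. Now $\widehat C_{i,n}$ is decreasing in $n$ (the analogous ratio is $\widehat C_{i,n+1}/\widehat C_{i,n}=\tfrac{(n+1)(2n-2i+1)}{(n+1-i)(2n+1)}<1$), and $B_{2i:2n}$ is decreasing in $n$ in the likelihood-ratio order (more sampled points, same rank), so $E[\Lambda_X^{w_1}(B_{2i:2n})/B_{2i:2n}]$ is decreasing in $n$ as well; the product of two nonnegative decreasing sequences is decreasing, which gives $J^{w_1}(X_{i:n})$ increasing in $n$. A decreasing weight is absorbed by the reflection $Y=u_X-X$: an IRFR $F$ automatically has $u_X<\infty$, $Y$ is DFR, and $y\mapsto w_1(u_X-y)$ is increasing, so $J^{w_1}(X_{i:n})$ coincides with the corresponding quantity for order statistics of $Y$ with an increasing weight, reducing (b) to the situation just treated.

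The genuine obstacle is the normalizing constant. In the unshifted representation $-2J^{w_1}(X_{i:n})=C_{i,n}E[\Lambda_X^{w_1}(B_{2i-1:2n-1})]$ with $C_{i,n}=\big(\tfrac{n!}{(i-1)!(n-i)!}\big)^{2}\tfrac{(2i-2)!(2n-2i)!}{(2n-1)!}$, the factor $C_{i,n}$ is \emph{not} monotone in $i$ (it is U-shaped), so the product cannot be read off as monotone, and $\Lambda_X^{w_1}$, though increasing, is the wrong object to pair against a $\mathrm{beta}(2i-1,\cdot)$ weight. Shifting the beta exponent by one unit of $u$ is exactly the device that simultaneously restores monotonicity of the constant and replaces $\Lambda_X^{w_1}$ by $\Lambda_X^{w_1}/u$, whose monotonicity is precisely what IRFR provides. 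After that, the two elementary ratio computations for $\widehat C_{i,n}$ and the likelihood-ratio ordering of the uniform order statistics are routine and the argument closes.
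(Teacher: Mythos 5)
Your part (a) is correct and is essentially the paper's own argument: peeling one power of $u$ off the beta weight is exactly the paper's passage from $\phi_{2i-1:2n-1}$ to $\phi_{2i:2n}$, your $\Lambda_X^{w_1}(u)/u$ is the paper's $M(u)=w_1(F^{-1}(u))r(F^{-1}(u))$ with $r=f/F$, your constant $\widehat C_{i,n}$ equals the paper's $c_{i,n}(2i-1)/2$, your ratio $\widehat C_{i+1,n}/\widehat C_{i,n}=\frac{(n-i)(2i+1)}{i(2n-2i-1)}$ is the reciprocal of the paper's factor $\frac{i(2n-2i-1)}{(2i+1)(n-i)}$, and using $\le_{lr}$ instead of $\le_{hr}$ to reach $B_{2i:2n}\le_{st}B_{2i+2:2n}$ is immaterial.

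Part (b) has a genuine gap. The first half of your argument (constant decreasing in $n$, $B_{2i:2n}$ stochastically decreasing in $n$) yields that $E\left[\Lambda_X^{w_1}(B_{2i:2n})/B_{2i:2n}\right]$ is decreasing in $n$ only if $M(u)=w_1(F^{-1}(u))r(F^{-1}(u))$ is increasing, and that requires $w_1$ increasing; under the stated hypothesis of (b), $w_1$ decreasing, $M$ is a product of a decreasing and an increasing factor and need not be monotone. Your proposed repair by reflection does not close this gap. First, $Y=u_X-X$ converts IRFR into DFR (log-convex $F$ becomes log-convex $\bar G$), not into IRFR, so the reversed-hazard factorization you rely on is unavailable for $Y$; the natural factorization for $Y$ pairs the increasing weight $\tilde w(y)=w_1(u_X-y)$ with the hazard rate of $Y$, which is decreasing, i.e.\ exactly the same unresolved increasing-times-decreasing tension. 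Second, the reflection sends $X_{i:n}$ to $u_X-Y_{n+1-i:n}$, so the rank you must track for $Y$ moves with $n$; in fact $\Lambda_Y^{\tilde w}(u)=\Lambda_X^{w_1}(1-u)$, so the reflected representation is nothing but the substitution $u\mapsto 1-u$ in the original integral and cannot supply the missing monotonicity. So what you have actually proved in (b) is the assertion for an increasing weight $w_1$, not the stated one. (Note that the paper's own proof of (b) is the same computation as your first half, with no reflection, and it too needs $M$ increasing to use $B_{2i:2n}\ge_{st}B_{2i:2n+2}$; the mismatch with the stated ``$w_1$ decreasing'' is present in the paper as well, but your reflection step does not resolve it, and as written it reads as if it did.)
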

	\begin{proof} (a) The weighted extropy of $X_{i:n}$ is
		\begin{align*}
			J^{w_1}(X_{i:n})&=-\frac{n c_{i,n}}{2}\int_{0}^{1}\Lambda_{X}^{w_1}(u)\phi_{2i-1:2n-1}(u)du\\
			&=-\frac{c_{i,n}(2i-1)}{4}\int_{0}^{1}w_1(F^{-1}(u))r\left(F^{-1}(u)\right)\phi_{2i:2n}(u)du\\
			&=-\frac{c_{i,n}(2i-1)}{4}E\left(M(B_{2i:2n})\right),
		\end{align*}
		where 	$c_{i,n}=\frac{\binom{2i-2}{i-1}\binom{2n-2i}{n-i}}{\binom{2n-1}{n-1}}$, $M(u)=w_1(F^{-1}(u))r\left(F^{-1}(u)\right)$ and $r(x)=f(x)/F(x)$ denote the reverse failure rate function of $X$. Now, it follows that
		\begin{align*}
			\frac{J^{w_1}(X_{i:n})}{J^{w_1}(X_{i+1:n})}&=\frac{c_{i,n}}{c_{i+1,n}}\left(\frac{2i-1}{2i+1}\right)
			\frac{E\left(M(B_{2i:2n})\right)}{E\left(M(B_{2i+2:2n})\right)}\\
			&=\frac{i(2n-2i-1)}{(2i+1)(n-i)}\frac{E\left(M(B_{2i:2n})\right)}{E\left(M(B_{2i+2:2n})\right)}\\
			&\le \frac{E\left(M(B_{2i:2n})\right)}{E\left(M(B_{2i+2:2n})\right)}.
		\end{align*}
		Since $B_{2i:2n}\le_{hr}B_{2i+2:2n}$, hence $B_{2i:2n}\le_{st}B_{2i+2:2n}$, hence under the hypothesis of the 
		theorem it follows that 
		\begin{align*}
			E\left(M(B_{2i:2n})\right)&\le E\left(M(B_{2i+2:2n})\right),
		\end{align*}
		which further implies that  $J^{w_1}(X_{i:n})\geq J^{w_1}(X_{i+1:n})$. This completes the proof of part (a).\\
	
		(b) Proceeding in same fashion as in part (a), under the assumption of part (b) result follows by observing that
		\begin{align*}
			\frac{J^{w_1}(X_{i:n})}{J^{w_1}(X_{i:n+1})}&=\frac{(2n+1)(n-i+1)}{(n+1)(2n-2i+1)}\left(\frac{2i-1}{2i+1}\right)
			\frac{E\left(M(B_{2i:2n})\right)}{E\left(M(B_{2i:2n+2})\right)}\\
			&\ge \frac{E\left(M(B_{2i:2n})\right)}{E\left(M(B_{2i:2n+2})\right)}
		\end{align*}
		and $B_{2i:2n}\ge_{st}B_{2i:2n+2}$. \hfill $\blacksquare$
	 
	\end{proof}

	\section{\textbf{General Weighted Extropies of RSS}}	
	Let $X$ be a random variable  with finite mean $\mu$ and variance $\sigma^2$. For $\textbf{X}_{SRS}=\{X_i,\ i=1,\ldots,n\}$, the joint pdf is $\prod_{i=1}^{n}f(x_i)$, as $X_i$'s, $i=1,\ldots,n$ are independent and identically distributed (i.i.d.). Hence the extropy of $\textbf{X}_{SRS}^{(n)}$ can be defined as
	\begin{align}\label{SRS1}
		J^{w_1}(\textbf{X}_{SRS}^{(n)})&=\frac{-1}{2}\prod_{i=1}^{n}\left(\int_{-\infty}^{\infty}w_1(x_i)f^2(x_i)dx_i\right)\nonumber \\
		&=\frac{-1}{2}\left(-2J^{w_1}(X)\right)^n\nonumber \\
		&=\frac{-1}{2}\left(E(\Lambda_X^{w_1}(U))\right)^n
	\end{align}
	 Now, we can write the general weighted extropy of   $\textbf{X}_{RSS}^{(n)}$  as
	\begin{align}\label{RSS1} J^{w_1}(\textbf{X}_{RSS}^{(n)})&=-\frac{1}{2}\prod_{i=1}^{n}\left(-2J^{w_1}(X_{(i:n)i})\right)\nonumber \\
		&=-\frac{1}{2}\prod_{i=1}^{n} \int_{-\infty}^{\infty}n^2 \binom{n-1}{i-1}^2w_1(x) F^{2i-2}(x)\bar F^{2n-2i}(x)f^2(x)dx \nonumber\\
		&=-\frac{1}{2}\prod_{i=1}^{n} \int_{0}^{1}n^2 \binom{n-1}{i-1}^2\Lambda_X^{w_1}(u) u^{2i-2} (1-u)^{2n-2i}du \nonumber\\
		&=- \frac{Q_n}{2}\prod_{i=1}^{n}E\left(\Lambda_X^{w_1} (B_{2i-1:2n-1})\right),
	\end{align}
	where \begin{align*}Q_n&=n^n\prod_{i=1}^{n}c_{i,n},\\
		c_{i,n}&=\frac{\binom{2i-2}{i-1}\binom{2n-2i}{n-i}}{\binom{2n-1}{n-1}}\end{align*}
	and $B_{2i-1:2n-1}$ is a beta distributed random variable with parameters $(2i-1)$ and $(2n-2i+1)$. Equation $(\ref{RSS1})$ provides an expression in simplified form of the general weighted extropy	of $\textbf{X}_{RSS}^{(n)}$. Now we provide some examples to illustrate the equation $(\ref{RSS1})$ .

	

	\begin{example}
		Let $W$ be a random variable with power distribution. The pdf and cdf of W are respectively $f(x)= \theta x^{\theta -1}$ 
		and $F(x)= x^{\theta}$ , $0<x<1$ ,  $\theta > 0$. Let $w_1(x)=x^m,\ x>0, \ m>0$, then
		it follows that 
		\[\Lambda_W^{w_1}(u)= w_1(F^{-1}(u)) f(F^{-1}(u))=\theta u^{\frac{m + \theta - 1}{\theta}}\]
		for $ w_1(x)=x^m.$ 
		Then we have
		\begin{align*}
			& J^{w_1}(\textbf{W}_{RSS}^{(n)}) \\
			&= -\frac{Q_n}{2}\prod_{i=1}^{n}E\left(\Lambda_W^{w_1} (B_{2i-1:2n-1})\right)\\
			&= - \frac{Q_n}{2}\prod_{i=1}^{n}\int_{0}^{1} \Lambda_W^{w_1}(u) \frac{(2n-1)!}{(2i-2)!(2n-2i)!}u^{2i-2} (1-u)^{2n-2i}du \\
			&= - \frac{Q_n}{2}\prod_{i=1}^{n}\int_{0}^{1} \theta u^{\frac{m + \theta - 1}{\theta}} \frac{(2n-1)!}{(2i-2)!(2n-2i)!}u^{2i-2} (1-u)^{2n-2i}du \\
			&=- \frac{Q_n}{2} {\theta}^n \prod_{i=1}^{n}\int_{0}^{1}  u^{\frac{m + \theta - 1}{\theta}} \frac{(2n-1)!}{(2i-2)!(2n-2i)!}u^{2i-2} (1-u)^{2n-2i}du\\
			&=- \frac{Q_n}{2} {\theta}^n \prod_{i=1}^{n} \frac{(2n-1)!  \Gamma(\frac{m+2i\theta-1}{\theta})}{(2i-2)!  \Gamma(m+(2n+1)\theta-1)}.
		\end{align*}\hfill $\blacksquare$
	\end{example}
	
	\begin{example}
		Let $Z$ have an exponential distribution with cdf $F_Z(z)=1-e^{-\lambda z}, \ \lambda >0, \ z>0$. Let $w_1(x)=x^m, \ m>0, \ x>0$, then it follows that
		\begin{align*}
			\Lambda_Z^{w_1} (u) = w_1(F^{-1}(u)) f(F^{-1}(u))
			= \frac{(-1)^m (1-u) (ln(1-u))^m}{\lambda^{m-1}}, 0<u<1.  	
		\end{align*}
		Then we have
		\begin{align*}
			\hspace{20pt} J^{w_1}(\textbf{Z}_{RSS}^{(n)}) 
			&= -\frac{Q_n}{2}\prod_{i=1}^{n}E\left(\Lambda_Z^{w_1} (B_{2i-1:2n-1})\right)\\
			&= - \frac{Q_n}{2}\prod_{i=1}^{n}\int_{0}^{1} \Lambda_Z^{w_1}(u) \frac{(2n-1)!}{(2i-2)!(2n-2i)!}u^{2i-2} (1-u)^{2n-2i}du \\
			&= - \frac{Q_n}{2}\prod_{i=1}^{n}\int_{0}^{1} \frac{(-1)^m (1-u) (ln(1-u))^m}{\lambda^{m-1}} \frac{(2n-1)!}{(2i-2)!(2n-2i)!}\\
			& \ \ \ \ \ \ \ \ \ \ \ \ \ \ \ \ \ \ \ \ \ \ \ \ \ \ \ \ \ \ \ \ \ \ \ \ \ \ \ \  \ \ \ \ \ \ \ \ \ \ \ \ \ \ \ \ \ \ \ \ \ \ \ \ \ \ \ \ \ \ \     \ \ \ \ \     u^{2i-2}(1-u)^{2n-2i}du\\
			&=- \frac{Q_n}{2}\prod_{i=1}^{n} \frac{(-1)^m}{\lambda^{m-1}}\int_{0}^{1} (ln(1-u))^m \frac{(2n-1)!}{(2i-2)!(2n-2i)!}\\
			& \ \ \ \ \ \ \ \ \ \ \ \ \ \ \ \ \ \ \ \ \ \ \ \ \ \ \ \ \ \ \ \ \ \ \ \ \ \ \ \  \ \ \ \ \ \ \ \ \ \ \ \ \ \ \ \ \ \ \ \ \ \ \ \ \ \ \ \ \ \ \     \ \    u^{2i-2}(1-u)^{2n-2i+1}du.
		\end{align*}
		Taking $u=1-e^{-x}$ in the above equation, we get
		\begin{align*}
			& J^{w_1}(\textbf{Z}_{RSS}^{(n)})\\ 
			&=- \frac{Q_n}{2}\prod_{i=1}^{n} \frac{(-1)^m}{\lambda^{m-1}}\int_{0}^{\infty} (ln(e^{-x}))^m \frac{(2n-1)!}{(2i-2)!(2n-2i)!}\\
			& \ \ \ \ \ \ \ \ \ \ \ \ \ \ \ \ \ \ \ \ \ \ \ \ \ \ \ \ \ \ \ \ \ \ \ \ \ \ \ \  \ \ \ \ \ \ \ \ \ \ \ \ \ \ \ \ \ \ \ \    (1-e^{-x})^{2i-2}(e^{-x})^{2n-2i+1} e^{-x}dx\\
			&=- \frac{Q_n}{2}\prod_{i=1}^{n} \frac{1}{\lambda^{m-1}}\int_{0}^{\infty} x^m \frac{(2n-1)!}{(2i-2)!(2n-2i)!}(1-e^{-x})^{2i-2}(e^{-x})^{2n-2i+2}dx\\
			&=- \frac{Q_n}{2} \frac{1}{\lambda^{n(m-1)}} \prod_{i=1}^{n} \int_{0}^{\infty} x^m \frac{(2n-1)!}{(2i-2)!(2n-2i)!}(1-e^{-x})^{2i-2}(e^{-x})^{2n-2i+2}dx\\
			&=- \frac{Q_n}{2} \frac{1}{\lambda^{n(m-1)}} \prod_{i=1}^{n} \left[ \left( \frac{2n-2i+1}{2n}\right) \int_{0}^{\infty} x^m \frac{(2n)!}{(2i-2)!(2n-2i+1)!}\right.\\
			& \ \ \ \ \ \ \ \ \ \ \ \ \ \ \ \ \ \ \ \ \ \ \ \ \ \ \ \ \ \ \ \ \ \ \ \ \ \ \ \  \ \ \    \ \ \ \ \ \left. (1-e^{-x})^{2i-2}(e^{-x})^{2n-2i+2}dx \right]\\
			&=- \frac{Q_n (2n-1)!!}{2^{n+1} n^n} \frac{1}{\lambda^{n(m-1)}} \prod_{i=1}^{n}  E(Z_{2i-1:2n}^{m}), \\
		\end{align*}
		where $Z_{2i-1:2n}$ is the $(2i-1)$-th order statistics of a sample of size $2n$ from exponential distribution having pdf given by 
		$$\phi_{2i-1:2n}=\frac{(2n)!}{(2i-2)!(2n-2i+1)!}(1-e^{-x})^{2i-2}(e^{-x})^{2n-2i+2},\  x\ge 0,$$ 
		and  $(2n-1)!!=\prod_{i=1}^{n} (2n-2i+1)=\prod_{i=1}^{n} (2i-1)$.\hfill $\blacksquare$
	\end{example}
	
	\begin{example}
		Let $V$ be a Pareto random variable with cdf $F(x)=1-x^{-\alpha},\  \alpha >0, \ x>1 $. Let $w_1(x)=x^m,\ m>0, \ x>0 $, then we get
		\begin{align*}
			\Lambda_V^{w_1} (u) 
			&= w_1(F^{-1}(u)) f(F^{-1}(u))\\
			&=\alpha (1-u)^{\frac{\alpha - m+1}{\alpha}}.		
		\end{align*}
		The weighted extropy of $\textbf{V}_{RSS}^{(n)}$ is 
		\begin{align*}
			J^{w_1}(\textbf{V}_{RSS}^{(n)}) 
			&= -\frac{Q_n}{2}\prod_{i=1}^{n}E\left(\Lambda_V^{w_1} (B_{2i-1:2n-1})\right)\\
			&= - \frac{Q_n}{2}\prod_{i=1}^{n}\int_{0}^{1} \Lambda_V^{w_1}(u) \frac{(2n-1)!}{(2i-2)!(2n-2i)!}u^{2i-2} (1-u)^{2n-2i}du \\
			&= - \frac{Q_n}{2}\prod_{i=1}^{n}\int_{0}^{1} \alpha (1-u)^{\frac{\alpha - m+1}{\alpha}} \frac{(2n-1)!}{(2i-2)!(2n-2i)!}u^{2i-2} (1-u)^{2n-2i}du \\
			&= - \frac{Q_n}{2} \alpha^n \prod_{i=1}^{n}\int_{0}^{1}  \frac{(2n-1)!}{(2i-2)!(2n-2i)!}u^{2i-2} (1-u)^{2n-2i+ \frac{\alpha - m+1}{\alpha}}du \\
			&=  - \frac{Q_n}{2} \alpha^n \prod_{i=1}^{n} \left[ \frac{(2n-1)!}{(2n-2i)!} \frac{\Gamma(\frac{2n\alpha-2i\alpha+2\alpha-m+1}{\alpha})}{\Gamma(\frac{\alpha+2n\alpha-m+1}{\alpha})} \right].
		\end{align*}\hfill $\blacksquare$

	\end{example}


		
		The following result gives the conditions under which the general weighted extropy will increase (decrease).	
		
		\begin{theorem}
			Let $X$ is a non-negative absolutely continuous random variable with pdf f and cdf F. Assume $\phi(x)$ is a increasing function and  $\frac{w_1(\phi(x))}{\phi^\prime (x)} \leq (\geq) w_1(x)$ and $\phi(0)=0$. If $Z=\phi(X)$, then $J^{w_1}(\textbf{X}_{RSS}^{(n)})\leq (\geq) J^{w_1}(\textbf{Z}_{RSS}^{(n)})$

		\end{theorem}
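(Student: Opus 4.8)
The plan is to build on the product representation $(\ref{RSS1})$ of the general weighted extropy of an RSS sample, which is valid for any non-negative random variable and whose prefactor $Q_n$ depends only on $n$, not on the underlying distribution; hence it applies simultaneously to $X$ and to $Z=\phi(X)$ with the \emph{same} constant. The argument then reduces to a pointwise comparison of $\Lambda_Z^{w_1}$ and $\Lambda_X^{w_1}$ on $(0,1)$, followed by taking expectations against the beta densities $\phi_{2i-1:2n-1}$ and then forming products.

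First I would observe that, since $\phi$ is increasing and differentiable with $\phi(0)=0$, the random variable $Z=\phi(X)$ is again non-negative, so the preceding development applies to it. Writing $G,g$ for the cdf and pdf of $Z$, one has $G(z)=F(\phi^{-1}(z))$, hence $G^{-1}(u)=\phi(F^{-1}(u))$, and $g(z)=f(\phi^{-1}(z))/\phi'(\phi^{-1}(z))$. Substituting gives
\[
\Lambda_Z^{w_1}(u)=w_1\!\left(G^{-1}(u)\right)g\!\left(G^{-1}(u)\right)=\frac{w_1\!\left(\phi(F^{-1}(u))\right)}{\phi'\!\left(F^{-1}(u)\right)}\,f\!\left(F^{-1}(u)\right).
\]
Applying the hypothesis $\frac{w_1(\phi(x))}{\phi'(x)}\le w_1(x)$ with $x=F^{-1}(u)$, the right-hand side is at most $w_1(F^{-1}(u))f(F^{-1}(u))=\Lambda_X^{w_1}(u)$; thus $0\le\Lambda_Z^{w_1}(u)\le\Lambda_X^{w_1}(u)$ for every $u\in(0,1)$, the non-negativity being clear since $w_1\ge 0$, $f\ge 0$ and $\phi'>0$.

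Next, integrating this pointwise inequality against the non-negative density $\phi_{2i-1:2n-1}$ yields $0\le E\big(\Lambda_Z^{w_1}(B_{2i-1:2n-1})\big)\le E\big(\Lambda_X^{w_1}(B_{2i-1:2n-1})\big)$ for each $i=1,\ldots,n$. Since all factors are non-negative, multiplying over $i$ gives $\prod_{i=1}^{n}E\big(\Lambda_Z^{w_1}(B_{2i-1:2n-1})\big)\le\prod_{i=1}^{n}E\big(\Lambda_X^{w_1}(B_{2i-1:2n-1})\big)$. Multiplying through by the negative constant $-Q_n/2$ reverses the inequality, and invoking $(\ref{RSS1})$ for both $X$ and $Z$ gives $J^{w_1}(\textbf{X}_{RSS}^{(n)})\le J^{w_1}(\textbf{Z}_{RSS}^{(n)})$. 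The reversed-hypothesis case is identical, with every inequality reversed. I expect the only delicate point to be the bookkeeping with the quantile transformation — the identities $G^{-1}(u)=\phi(F^{-1}(u))$ and $g(z)=f(\phi^{-1}(z))/\phi'(\phi^{-1}(z))$ — together with noting that $\phi(0)=0$ is precisely what guarantees $Z\ge 0$, so that the RSS representation and the sign of each product factor are legitimate; everything after the pointwise bound is routine monotonicity of expectation and of products of non-negative numbers.
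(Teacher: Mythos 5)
Your proposal is correct and follows essentially the same route as the paper: compute $\Lambda_Z^{w_1}(u)=\frac{w_1(\phi(F^{-1}(u)))}{\phi'(F^{-1}(u))}f(F^{-1}(u))$ via the quantile transformation, apply the hypothesis pointwise to get $\Lambda_Z^{w_1}\le(\ge)\,\Lambda_X^{w_1}$ on $(0,1)$, and conclude through the representation $(\ref{RSS1})$. In fact you spell out the steps the paper leaves implicit (non-negativity of each factor, monotonicity of the expectations against the beta densities, and the sign reversal from the factor $-Q_n/2$), so nothing is missing.
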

		\begin{proof}
			Let pdf and cdf of $Z$ be $h$ and $H$, respectively. Then 
			\begin{align*}
				\Lambda_Y^{w_1}(u)&=w_1\left(H^{-1}(u)\right) h\left(H^{-1}(u)\right)\\
				&=\frac{w_1(\phi(F^{-1}(u)))}{\phi^\prime(F^{-1}(u))}f(F^{-1}(u)) \hspace{5mm} \forall \hspace{5mm} 0<u<1.
			\end{align*}
			Note that $\phi(x)\geq \phi(0)$, $\forall$ $x\geq 0$. Hence for $ 0<u<1$, we have
			\begin{align*}
				\Lambda_Y^{w_1}(u)=\frac{w_1(\phi(F^{-1}(u)))}{\phi^\prime(F^{-1}(u))}f(F^{-1}(u))\leq w_1(F^{-1}(u))f(F^{-1}(u))=\Lambda_X^{w_1}(u).
			\end{align*}
			Therefore $J^{w_1}(\textbf{X}_{RSS}^{(n)})\leq J^{w_1}(\textbf{Y}_{RSS}^{(n)})$ using equation (\ref{RSS1})\hfill. Proof of other part can be done in similar fashion.\hfill $\blacksquare$
		    \end{proof}
		
		\noindent We now give a lower bound for the general weighted extropy of RSS data. This lower bound is dependent on the weighted extropy of the SRS data, as shown by the following result.
		
		\begin{theorem}
			Let $X$ be an absoultely continouous random variable with pdf f and cdf F. Then for $n\geq 2$,
		\end{theorem}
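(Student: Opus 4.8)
The plan is to work directly from the two product representations already established. By (\ref{SRS1}), $J^{w_1}(\textbf{X}_{SRS}^{(n)}) = -\tfrac12\bigl(-2J^{w_1}(X)\bigr)^{n}$, and by (\ref{RSS1}), $J^{w_1}(\textbf{X}_{RSS}^{(n)}) = -\tfrac12\prod_{i=1}^{n}\bigl(-2J^{w_1}(X_{(i:n)i})\bigr)$, where each factor
\[
-2J^{w_1}(X_{(i:n)i}) \;=\; \int_{-\infty}^{\infty} w_1(x)\,f_{i:n}^{2}(x)\,dx \;\ge\; 0
\]
since $w_1\ge 0$. Thus the right-hand side of (\ref{RSS1}) is, up to the factor $-\tfrac12$, a product of $n$ nonnegative numbers, and the natural route to the asserted lower bound $J^{w_1}(\textbf{X}_{RSS}^{(n)}) \ge n^{n} J^{w_1}(\textbf{X}_{SRS}^{(n)})$ is to control that product by the arithmetic--geometric mean inequality once the corresponding sum is controlled.

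First I would record the elementary identity $\sum_{i=1}^{n} f_{i:n}(x) = n\,f(x)$, which follows from $\sum_{i=1}^{n}\binom{n-1}{i-1}F^{i-1}(x)\bar F^{n-i}(x)=1$. Squaring it gives $\sum_{i=1}^{n} f_{i:n}^{2}(x) + \sum_{i\ne j} f_{i:n}(x)f_{j:n}(x) = n^{2}f^{2}(x)$. Multiplying through by $w_1(x)\ge 0$, integrating over $\mathbb{R}$, and discarding the nonnegative cross-term sum yields
\[
\sum_{i=1}^{n}\bigl(-2J^{w_1}(X_{(i:n)i})\bigr) \;=\; \sum_{i=1}^{n}\int_{-\infty}^{\infty} w_1(x) f_{i:n}^{2}(x)\,dx \;\le\; n^{2}\int_{-\infty}^{\infty} w_1(x) f^{2}(x)\,dx \;=\; -2n^{2}J^{w_1}(X).
\]
Then applying AM--GM to the $n$ nonnegative reals $-2J^{w_1}(X_{(i:n)i})$ gives
\[
\prod_{i=1}^{n}\bigl(-2J^{w_1}(X_{(i:n)i})\bigr) \;\le\; \Bigl(\tfrac1n\textstyle\sum_{i=1}^{n}\bigl(-2J^{w_1}(X_{(i:n)i})\bigr)\Bigr)^{n} \;\le\; \bigl(-2nJ^{w_1}(X)\bigr)^{n} \;=\; n^{n}\bigl(-2J^{w_1}(X)\bigr)^{n}.
\]
Substituting this into (\ref{RSS1}) and invoking (\ref{SRS1}) produces $J^{w_1}(\textbf{X}_{RSS}^{(n)}) \ge -\tfrac12\,n^{n}\bigl(-2J^{w_1}(X)\bigr)^{n} = n^{n}\,J^{w_1}(\textbf{X}_{SRS}^{(n)})$.

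I do not expect a genuine obstacle; every step is standard. The two points requiring care are (i) that the hypothesis $w_1\ge 0$ is exactly what legitimizes both discarding the cross terms and feeding nonnegative inputs into AM--GM, and (ii) the sign bookkeeping, so that an \emph{upper} bound on a product of positive quantities becomes a \emph{lower} bound for the nonpositive quantity $J^{w_1}(\textbf{X}_{RSS}^{(n)})$ (here one also uses $n^{n}\ge 1$ together with $J^{w_1}(\textbf{X}_{SRS}^{(n)})\le 0$ to see the stated bound is genuinely a lower bound). If an equality characterization is wanted, I would note that equality would force all $-2J^{w_1}(X_{(i:n)i})$ to coincide and simultaneously all cross products $w_1 f_{i:n}f_{j:n}$, $i\ne j$, to vanish almost everywhere; for $n\ge 2$ these cannot both hold on a set where $w_1 f>0$, so the inequality is strict whenever $J^{w_1}(X)\ne 0$.
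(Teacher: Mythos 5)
Your chain of steps is valid: the identity $\sum_{i=1}^{n}f_{i:n}(x)=nf(x)$, discarding the nonnegative cross terms (this is where $w_1\ge 0$ is used), and AM--GM applied to the $n$ nonnegative factors $-2J^{w_1}(X_{(i:n)i})$ do give $J^{w_1}(\textbf{X}_{RSS}^{(n)})\ge n^{n}J^{w_1}(\textbf{X}_{SRS}^{(n)})$, i.e.\ a ratio bound of $n^{n}$, and the sign bookkeeping is right. This is a genuinely different argument from the paper's: the paper gives no details beyond citing Theorem 2.8 of Qiu and Raqab (2022), whose method is to bound each factor of (\ref{RSS1}) \emph{separately}, replacing the kernel $u^{2i-2}(1-u)^{2n-2i}$ by its maximum $(i-1)^{2i-2}(n-i)^{2n-2i}/(n-1)^{2n-2}$ for $2\le i\le n-1$ (and by $1$ for $i=1,n$); multiplying the $n$ per-factor constants is exactly what produces the constant displayed after the theorem, $C_n=\frac{n^{2n}}{(n-1)^{2(n-1)(n-2)}}\prod_{i=2}^{n-1}\binom{n-1}{i-1}^2(i-1)^{2i-2}(n-i)^{2n-2i}$. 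Coupling the factors through the sum identity and AM--GM, as you do, buys a much cleaner and sharper constant ($n^n=4$ versus $C_2=16$ at $n=2$; $27$ versus $C_3=729/4$ at $n=3$), whereas the factor-by-factor route yields the stated constant directly.

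The one caveat is that what you call ``the asserted lower bound'' is not what the theorem asserts (the displayed inequality sits just after the truncated theorem environment): the claim is that the ratio is at most $C_n$, not $n^{n}$. Your inequality implies the displayed one only after verifying $n^{n}\le C_n$, which is true for every $n\ge 2$ and with a growing margin, but is nowhere addressed in your write-up and is not a one-line check --- the naive per-factor estimate $\binom{n-1}{i-1}(i-1)^{i-1}(n-i)^{n-i}/(n-1)^{n-1}\ge 1/n$ only gives $C_n\ge n^{4}$. So either add that comparison explicitly, or present your bound as a sharper replacement for the stated one, or, if the literal display is wanted, run the per-factor sup bound as in Qiu and Raqab. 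Two minor points: the ratio formulation tacitly needs $J^{w_1}(X)\neq 0$ so that $J^{w_1}(\textbf{X}_{SRS}^{(n)})\neq 0$, and your closing strictness/equality discussion is fine but not required for the statement.
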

		\begin{align*}
			\frac{ J^{w_1}(\textbf{X}_{RSS}^{(n)})}{ J^{w_1}(\textbf{X}_{SRS}^{(n)})}\leq \frac{n^{2n}}{(n-1)^{2(n-1)(n-2)}}\prod_{i=2}^{n-1}\left(\binom{n-1}{i-1}^2(i-1)^{2i-2}(n-i)^{2n-2i}\right).
		\end{align*}
		\begin{proof}
			Proof is on similar lines as proof of  theorem 2.8 of Qiu and Raqab (2022).\hfill $\blacksquare$
		\end{proof}

		\section {Characterization results}

		\begin{theorem}
			Let $X$ be an absolutely continuous random variable with pdf f and cdf F; and assume $w_1(-x)=-w_1(x)$. Then $X$ is symmetric distributed  random variable with mean 0 if and only if	$J^{w_1}(\textbf{X}_{RSS}^{(n)})=0$  for all odd $n\geq 1$. 
		\end{theorem}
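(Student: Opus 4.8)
The plan is to work from the product representation $(\ref{RSS1})$,
\[
J^{w_1}(\mathbf{X}_{RSS}^{(n)})=-\frac{Q_n}{2}\prod_{i=1}^{n}E\left(\Lambda_X^{w_1}(B_{2i-1:2n-1})\right),\qquad \Lambda_X^{w_1}(u)=w_1(F^{-1}(u))f(F^{-1}(u)),
\]
and to convert the symmetry of $X$ about $0$ into the reflection identity $\Lambda_X^{w_1}(1-u)=-\Lambda_X^{w_1}(u)$ for $u\in(0,1)$. The whole argument then hinges on the interplay between this reflection property of the density--quantile function and the symmetry of the Beta weights $\phi_{2i-1:2n-1}$.

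For the forward implication, suppose $X$ is symmetric about $0$. Then $f$ is even and $F^{-1}(1-u)=-F^{-1}(u)$, so together with $w_1(-x)=-w_1(x)$ one gets $\Lambda_X^{w_1}(1-u)=w_1(-F^{-1}(u))f(-F^{-1}(u))=-\Lambda_X^{w_1}(u)$. For odd $n$ the central index $i=(n+1)/2$ produces $B_{n:2n-1}\sim\mathrm{Beta}(n,n)$, whose density is symmetric about $1/2$, and substituting $u\mapsto 1-u$ gives
\[
E\left(\Lambda_X^{w_1}(B_{n:2n-1})\right)=\int_0^1\Lambda_X^{w_1}(u)\,\phi_{n:2n-1}(u)\,du=-\int_0^1\Lambda_X^{w_1}(u)\,\phi_{n:2n-1}(u)\,du=0 .
\]
Hence one factor of the product vanishes and $J^{w_1}(\mathbf{X}_{RSS}^{(n)})=0$ for every odd $n$ (the case $n=1$ reducing to $J^{w_1}(X)=0$, which follows the same way from $B_{1:1}$ uniform).

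For the converse, vanishing of the product for an odd $n$ forces $E(\Lambda_X^{w_1}(B_{2j-1:2n-1}))=0$ for at least one index $j=j(n)$, and the main obstacle is to upgrade this family of ``some factor vanishes'' statements, ranging over all odd $n$, to the full list of central moment identities $\int_0^1\Lambda_X^{w_1}(u)\,[u(1-u)]^{k}\,du=0$ for all $k\ge 0$. I would do this in two steps: a concentration estimate ruling out the extreme indices for large $n$ (since $E(\Lambda_X^{w_1}(B_{2j-1:2n-1}))$ with $j$, resp.\ $n-j$, held fixed is for large $n$ governed by the behaviour of $\Lambda_X^{w_1}$ near the endpoint $0$, resp.\ $1$, and thus cannot vanish unless $\Lambda_X^{w_1}$ degenerates there), which confines the vanishing index to the bulk; then using the oddness $w_1(-x)=-w_1(x)$ to pin the bulk index to the centre $j=(n+1)/2$, where the factor is exactly $\int_0^1\Lambda_X^{w_1}(u)[u(1-u)]^{(n-1)/2}\,du$. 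Granting these identities, the span of $\{[u(1-u)]^{k}\}_{k\ge 0}$ equals the space of polynomials symmetric about $1/2$ and is therefore dense in the symmetric part of $L^2(0,1)$, so the symmetric part of $\Lambda_X^{w_1}$ about $1/2$ is zero, i.e.\ $\Lambda_X^{w_1}(1-u)=-\Lambda_X^{w_1}(u)$ a.e.; using $w_1(-x)=-w_1(x)$ and $f>0$ on the support, this is equivalent to $F(-x)=\bar F(x)$, i.e.\ $X$ symmetric about $0$.

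I expect the genuinely delicate points to be (i) locating the vanishing index exactly at the centre for every odd $n$ — the asymptotic estimate must be made quantitative and combined with the oddness of $w_1$ — and (ii) the final translation step, which is the reason a mild non-degeneracy assumption on $w_1$ (e.g.\ strict monotonicity, or $w_1(x)\neq 0$ for $x\neq 0$) is needed. Routine bookkeeping I would not dwell on: the changes of variable $t=F(x)$ and $u\mapsto 1-u$, the explicit constants $Q_n,c_{i,n}$, and the integrability of $\Lambda_X^{w_1}$ required for the $L^2$ argument.
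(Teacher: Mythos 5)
Your sufficiency argument is correct and is essentially the paper's: the paper derives the same reflection identity $\Lambda_X^{w_1}(1-u)=-\Lambda_X^{w_1}(u)$ and then (following Qiu and Raqab) pairs the factor indexed by $i$ with the one indexed by $n-i+1$ via $\phi_{2i-1:2n-1}(1-u)=\phi_{2(n-i+1)-1:2n-1}(u)$ to obtain $J^{w_1}(\textbf{X}_{RSS}^{(n)})=(-1)^nJ^{w_1}(\textbf{X}_{RSS}^{(n)})$, whereas you annihilate the single central factor $i=(n+1)/2$ using the symmetry of the $\mathrm{Beta}(n,n)$ density. Both are valid.

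The converse is where your proposal has a genuine gap. From $\prod_{i=1}^{n}E\left(\Lambda_X^{w_1}(B_{2i-1:2n-1})\right)=0$ you only learn that \emph{some} factor vanishes, and your plan to force that factor to be the central one is not an argument but a hope. The proposed concentration estimate only constrains indices $j$ with $j$ or $n-j$ bounded as $n\to\infty$, so it says nothing for small odd $n$ (already for $n=3$ any of the three factors could be the vanishing one), it leaves an entire ``bulk'' of indices untouched for large $n$, and the oddness $w_1(-x)=-w_1(x)$ gives no identified constraint on the non-central expectations $E\left(\Lambda_X^{w_1}(B_{2j-1:2n-1})\right)$. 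Consequently the chain ``product $=0$ for all odd $n$ $\Rightarrow$ $\int_0^1\Lambda_X^{w_1}(u)\left[u(1-u)\right]^k du=0$ for all $k\ge 0$'' is not established, and without it your completeness/density step has nothing to act on. Note that the paper takes a completely different and more elementary route for necessity: it uses only $n=1$, folds the negative half-line onto the positive one using $w_1(-x)=-w_1(x)$ to write $0=J^{w_1}(X)=\frac12\int_0^{\infty}w_1(x)\left(f(x)+f(-x)\right)\left(f(x)-f(-x)\right)dx$, and reads off symmetry from there. (Be aware that this last step is itself not airtight as written, since $f(x)-f(-x)$ can change sign and a single vanishing integral does not force it to be identically zero; a rigorous converse would indeed need the whole family of odd $n$, but extracting the central-factor identities from the vanishing of the products requires an argument you have not supplied.)
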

		\begin{proof}
			For sufficiency, suppose $f(x)=f(-x)$ for all $x\geq 0$. Also since $F^{-1}(u)=- F^{-1}(1-u)$,  $f(F^{-1}(u))=f(F^{-1}(1-u))$ for all $0<u<1$ and $w_1(-x)=-w_1(x)$, which implies that
			\begin{equation*}
				\Lambda_X^{w_1} (u)=w_1(F^{-1}(u))f(F^{-1}(u))=-w_1(F^{-1}(1-u)) f(F^{-1}(1-u))=-\Lambda_X^{w_1} (1-u)
			\end{equation*}
			In the similar fashion as in Qiu and Raqab (2022),   $J^{w_1}(\textbf{X}_{RSS}^{(n)})=-J^{w_1}(\textbf{X}_{RSS}^{(n)})$ .
			This completes the proof of sufficiency.\\
			For the necessity, since equation $J^{w_1}(\textbf{X}_{RSS}^{(n)})=0$ holds for all odd $n\geq 1$. For $n=1$, \[J^{w_1}(\textbf{X}_{RSS}^{(1)})=J^w(X)=0.\]
			Now,
			\begin{align*}
				J^{w_1}(X)&=-\frac{1}{2}\int_{-\infty}^{\infty}w_1(x)f^2(x)dx\\
				&=-\frac{1}{2}\left(\int_{-\infty}^{0}w_1(x)f^2(x)dx+\int_{0}^{\infty}w_1(x)f^2(x)dx\right)\\
				&=-\frac{1}{2}\left(\int_{-\infty}^{0}w_1(x)f^2(x)dx+\int_{0}^{\infty}w_1(x)f^2(x)dx\right)\\
				&=-\frac{1}{2}\left(-\int_{0}^{\infty}w_1(x)f^2(-x)dx+\int_{0}^{\infty}w_1(x)f^2(x)dx\right)\\
				&=-\frac{1}{2}\left(-\int_{0}^{\infty}w_1(x) \left(f(x)+f(-x)\right)\left(f(x)-f(-x)\right)dx\right)\\
				&=0,
			\end{align*}
			since $w_1(x)>0$, and $f(x)=f(-x)$ $\forall$ $x\geq 0$. This provides the proof of necessity. \hfill $\blacksquare$
		\end{proof}
		
		\section{Stochastic comparision}
		In the following result, we provide the conditions for comparing two RSS schemes under different weights.
		
		\begin{theorem}\label{thm com rss1}
			Let $X$  and $Y$ be nonnegative random variables with pdf's f and g, cdf's F and G, respectively having $u_X=u_Y<\infty$.\\
			(a) If $w_1$ is increasing, $w_1(x)\geq w_2(x)$ and $X\le_{disp} Y$, then $J^{w_1}(\textbf{X}_{RSS}^{(n)})\le J^{w_2}(\textbf{Y}_{RSS}^{(n)})$.\\
			(b)   If $w_1$ is increasing, $w_1(x)\leq w_2(x)$ and $X\ge_{disp} Y$, then $J^{w_1}(\textbf{X}_{RSS}^{(n)})\ge J^{w_2}(\textbf{Y}_{RSS}^{(n)})$.
		\end{theorem}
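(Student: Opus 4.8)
The plan is to imitate the proof of Theorem~\ref{thm com en1}: first I would establish the pointwise inequality between $\Lambda_X^{w_1}$ and $\Lambda_Y^{w_2}$ on $(0,1)$ that already appears as~(\ref{stor0}) there, and then transfer it to the RSS quantities through the product representation~(\ref{RSS1}).

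For part~(a): since $X\le_{disp}Y$ one has $f(F^{-1}(u))\ge g(G^{-1}(u))$ for $u\in(0,1)$, and because $u_X=u_Y<\infty$, Theorem~3.B.13(b) of Shaked and Shanthikumar (2007) gives $X\ge_{st}Y$, hence $F^{-1}(u)\ge G^{-1}(u)$ for all $u\in(0,1)$. Monotonicity of $w_1$ together with $w_1\ge w_2\ge 0$ then yields $w_1(F^{-1}(u))\ge w_1(G^{-1}(u))\ge w_2(G^{-1}(u))$, so that
\[
\Lambda_X^{w_1}(u)=w_1(F^{-1}(u))f(F^{-1}(u))\ \ge\ w_2(G^{-1}(u))g(G^{-1}(u))=\Lambda_Y^{w_2}(u)\ \ge\ 0,\qquad 0<u<1,
\]
which is precisely the estimate~(\ref{stor0}).

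Next I would invoke~(\ref{RSS1}), observing that both $J^{w_1}(\mathbf{X}_{RSS}^{(n)})$ and $J^{w_2}(\mathbf{Y}_{RSS}^{(n)})$ are of the form $-\tfrac{Q_n}{2}\prod_{i=1}^{n}E\big(\Lambda_\bullet^{\bullet}(B_{2i-1:2n-1})\big)$ with the \emph{same} constant $Q_n>0$ and the \emph{same} beta random variables $B_{2i-1:2n-1}$, since these depend only on $i$ and $n$, not on the underlying law. The pointwise bound above gives, for each $i$, $E(\Lambda_X^{w_1}(B_{2i-1:2n-1}))\ge E(\Lambda_Y^{w_2}(B_{2i-1:2n-1}))\ge 0$; since all these factors are nonnegative, the two products are ordered the same way, and multiplying by $-Q_n/2<0$ reverses the direction, giving $J^{w_1}(\mathbf{X}_{RSS}^{(n)})\le J^{w_2}(\mathbf{Y}_{RSS}^{(n)})$. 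Part~(b) follows by the mirror-image argument: $X\ge_{disp}Y$ and $u_X=u_Y<\infty$ give $g(G^{-1}(u))\ge f(F^{-1}(u))$ and $G^{-1}(u)\ge F^{-1}(u)$, and with $w_1$ increasing and $w_1\le w_2$ one gets $\Lambda_X^{w_1}(u)\le\Lambda_Y^{w_2}(u)$ on $(0,1)$, which propagates through~(\ref{RSS1}) to $J^{w_1}(\mathbf{X}_{RSS}^{(n)})\ge J^{w_2}(\mathbf{Y}_{RSS}^{(n)})$.

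I expect no serious obstacle here — the content is a direct upgrade of Theorem~\ref{thm com en1} from $n=1$ to general $n$. The one place to be careful is the bookkeeping in the product step: one must first record that each factor $E(\Lambda_\bullet^{\bullet}(B_{2i-1:2n-1}))$ is nonnegative (from $w_i\ge 0$ and $f,g\ge 0$) before term-by-term domination can be promoted to domination of the products, and then not forget that the prefactor $-Q_n/2$ is negative and therefore reverses the inequality.
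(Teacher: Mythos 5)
Your proposal is correct and follows essentially the same route as the paper: derive the pointwise bound $\Lambda_X^{w_1}(u)\ge\Lambda_Y^{w_2}(u)$ from the dispersive order, monotonicity of $w_1$, and $w_1\ge w_2$, then push it through the product representation in~(\ref{RSS1}). Your explicit remark that each factor $E\left(\Lambda_\bullet^{\bullet}(B_{2i-1:2n-1})\right)$ is nonnegative before comparing the products is a small but genuine improvement in rigor over the paper's version, which leaves that step implicit.
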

		\begin{proof}
			(a) Since $X\le_{disp} Y$, therefore we have $f(F^{-1}(u))\ge g(G^{-1}(u))$ for all $u\in (0,1)$. Then using Theorem 3.B.13(b) of Shaked and Shanthikumar (2007), $X\le_{disp} Y$ implies that $X\ge_{st} Y$. Hence $F^{-1}(u) \ge G^{-1}(u)$ $\forall$ $u\in (0,1)$. Since $w_1$ is increasing and $w_1(x)\geq w_2(x)$, then $w_1(F^{-1}(u)) \ge w_1(G^{-1}(u))\ge w_2(G^{-1}(u))$. 
			Hence 
			\begin{align}\label{stor1}
				\Lambda_X^{w_1} (u)&=w_1(F^{-1}(u)) f(F^{-1}(u))\nonumber\\
				&\ge w_2(G^{-1}(u)) g(G^{-1}(u))\nonumber\\
				=&\Lambda_Y^{w_2} (u).
			\end{align}
			Now,  
			\begin{align*}
				J^{w_1}(\textbf{X}_{RSS}^{(n)})&=- \frac{Q_n}{2}\prod_{i=1}^{n}E\left(\Lambda_X^{w_1} (B_{2i-1:2n-1})\right)\\
				&\le - \frac{Q_n}{2}\prod_{i=1}^{n}E\left(\Lambda_Y^{w_2} (B_{2i-1:2n-1})\right)\\
				&=J^{w_2}(\textbf{Y}_{RSS}^{(n)}),
			\end{align*}
			the inequality here follows using (\ref{stor1}). Hence the result. \\
			(b) On similar arguments as in part (a), result follows.\hfill $\blacksquare$
		\end{proof}
		
		If we take $w_1(x)=w_2(x)$ in above theorem, then The following corollary follows.
		
		\begin{corollary}\label{cor com rss1}
			Let $X$  and $Y$ be nonnegative random variables with pdf's f and g, cdf's F and G, respectively having $u_X=u_Y<\infty$; let $w_1$ is increasing. Then\\
			(a) If $X\le_{disp} Y$, then $J^{w_1}(\textbf{X}_{RSS}^{(n)})\le J^{w_1}(\textbf{Y}_{RSS}^{(n)})$.\\
			(b)  If $X\ge_{disp} Y$, then $J^{w_1}(\textbf{X}_{RSS}^{(n)})\ge J^{w_1}(\textbf{Y}_{RSS}^{(n)})$.
		\end{corollary}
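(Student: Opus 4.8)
The plan is to obtain this as an immediate specialization of Theorem~\ref{thm com rss1}. For part~(a), apply Theorem~\ref{thm com rss1}(a) with $w_2\equiv w_1$: the hypothesis that $w_1$ is increasing is assumed, the condition $w_1(x)\ge w_2(x)$ holds trivially with equality, and $X\le_{disp}Y$ is assumed; hence the theorem yields $J^{w_1}(\textbf{X}_{RSS}^{(n)})\le J^{w_2}(\textbf{Y}_{RSS}^{(n)})=J^{w_1}(\textbf{Y}_{RSS}^{(n)})$. Part~(b) is identical, invoking Theorem~\ref{thm com rss1}(b) with $w_2\equiv w_1$ and $X\ge_{disp}Y$. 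Since the corollary is exactly the $w_1=w_2$ case of that theorem, this one-line deduction is really all that is needed.

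For completeness I would also sketch the self-contained argument, which mirrors the proof of Theorem~\ref{thm com rss1}. Since $X\le_{disp}Y$ we have $f(F^{-1}(u))\ge g(G^{-1}(u))$ for all $u\in(0,1)$, and by Theorem~3.B.13(b) of Shaked and Shanthikumar (2007) (applicable because $l_X,l_Y\ge 0$ and $u_X=u_Y<\infty$) also $X\ge_{st}Y$, so $F^{-1}(u)\ge G^{-1}(u)$. As $w_1$ is increasing and nonnegative, $w_1(F^{-1}(u))\ge w_1(G^{-1}(u))\ge 0$, whence $\Lambda_X^{w_1}(u)\ge\Lambda_Y^{w_1}(u)\ge 0$ for every $u\in(0,1)$. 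Evaluating at the beta variables $B_{2i-1:2n-1}$ and taking expectations gives $E(\Lambda_X^{w_1}(B_{2i-1:2n-1}))\ge E(\Lambda_Y^{w_1}(B_{2i-1:2n-1}))\ge 0$ for each $i=1,\dots,n$; multiplying these nonnegative factors and using representation~(\ref{RSS1}) with $Q_n>0$ and the negative prefactor $-Q_n/2$ reverses the inequality, yielding $J^{w_1}(\textbf{X}_{RSS}^{(n)})\le J^{w_1}(\textbf{Y}_{RSS}^{(n)})$. Part~(b) follows by reversing all the disp/st inequalities.

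The only point requiring any care is the termwise multiplication of the $n$ factor inequalities in~(\ref{RSS1}): this is legitimate precisely because each $\Lambda^{w_1}\ge 0$ (as $w_1\ge 0$), so all the expectations are nonnegative, and one must keep track that the common prefactor $-Q_n/2$ is negative so that $\ge$ becomes $\le$. Beyond this bookkeeping there is no genuine obstacle, since the statement is a direct corollary of Theorem~\ref{thm com rss1}.
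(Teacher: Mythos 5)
Your proposal is correct and matches the paper exactly: the corollary is obtained there by setting $w_1=w_2$ in Theorem \ref{thm com rss1}, which is precisely your one-line deduction. Your additional remark that the termwise multiplication of the $n$ expectation inequalities in (\ref{RSS1}) is only legitimate because each $\Lambda^{w_1}\ge 0$ is a useful point of care that the paper itself leaves implicit.
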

		
		\begin{remark}
			For $w_1(x)=x$, the result in Corollary  \ref{cor com rss1} was proved by Qiu and Raqab (2022).
		\end{remark}

		\begin{lemma}\label{lemma1} [Ahmed et al. (1986); also see Qiu and Raqab (2022), lemma 4.3]
			Let $X$ and $Y$  be nonnegative random variables with pdf's f and g , respectively, satisfying $f(0)\ge g(0)>0$. If $X\le_{su}Y$ (or $X\le_{*}Y$ or $X\le_{c}Y$), then $X\le_{disp}Y$.
		\end{lemma}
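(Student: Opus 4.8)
The plan is to reduce all three hypotheses to the single one $X\le_{su}Y$ and then run a short superadditivity argument. Recall that $X\le_{c}Y$, $X\le_{*}Y$, and $X\le_{su}Y$ are the statements that $\psi:=G^{-1}\circ F$ is, respectively, convex, star-shaped ($\psi(x)/x$ nondecreasing), and superadditive. First I would note that a convex $\psi$ with $\psi(0)=0$ has $\psi(x)/x$ nondecreasing, and that $\psi(x)/x$ nondecreasing gives
\[
\psi(x+y)=\frac{\psi(x+y)}{x+y}\,x+\frac{\psi(x+y)}{x+y}\,y\ \ge\ \frac{\psi(x)}{x}\,x+\frac{\psi(y)}{y}\,y=\psi(x)+\psi(y),
\]
so $X\le_{c}Y\Rightarrow X\le_{*}Y\Rightarrow X\le_{su}Y$, and it is enough to treat the case $X\le_{su}Y$. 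Also, $f(0),g(0)>0$ together with nonnegativity of $X,Y$ force $l_X=l_Y=0$, so $\psi(0)=G^{-1}(F(0))=G^{-1}(0)=0$ and $\psi$ is nondecreasing.

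Next I would establish the key inequality $\psi(x)\ge x$ for all $x\ge 0$. Iterating superadditivity over $n$ equal blocks gives $\psi(x)\ge n\,\psi(x/n)$, that is $\psi(x)/x\ge \psi(x/n)/(x/n)$ for every $n\ge 1$. Since $F$ and $G$ are absolutely continuous with $F(t)\sim f(0)\,t$ and $G^{-1}(u)\sim u/g(0)$ as their arguments tend to $0$, the ratio $\psi(t)/t=G^{-1}(F(t))/t$ converges to $f(0)/g(0)$ as $t\downarrow 0$; letting $n\to\infty$ in the previous inequality then yields $\psi(x)/x\ge f(0)/g(0)\ge 1$.

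Finally, for $0\le x\le y$, superadditivity and the bound just proved give
\[
\psi(y)=\psi\big(x+(y-x)\big)\ \ge\ \psi(x)+\psi(y-x)\ \ge\ \psi(x)+(y-x),
\]
hence $\psi(y)-y\ge\psi(x)-x$; thus $G^{-1}F(x)-x$ is nondecreasing on $[0,\infty)$, which is precisely the definition of $X\le_{disp}Y$.

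I expect the main obstacle to be the middle step — pushing the local data at the origin, $\psi'(0^+)=f(0)/g(0)\ge 1$, out to the global bound $\psi(x)\ge x$. This requires the $n\to\infty$ limiting argument together with enough regularity of $F,G$ near $0$ that $\psi(t)/t$ genuinely \emph{converges} to $f(0)/g(0)$ rather than only having it as a $\liminf$, and the equality case $f(0)=g(0)$ is the delicate one. The reduction among the orders and the final rearrangement are routine.
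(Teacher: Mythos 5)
The paper gives no proof of this lemma at all: it is imported verbatim from Ahmed et al. (1986) (see also Qiu and Raqab (2022), Lemma 4.3) and used as a black box, so there is no in-paper argument to compare against. Your proposal supplies a self-contained proof, and it is essentially the classical argument behind the cited result: with $\psi=G^{-1}\circ F$ and $\psi(0)=0$ (which does follow from $f(0)\ge g(0)>0$ forcing $l_X=l_Y=0$), the chain $\le_c\Rightarrow\le_*\Rightarrow\le_{su}$ is standard and your verification of it is correct, so reducing to the superadditive case is legitimate; the iteration $\psi(x)\ge n\,\psi(x/n)$ combined with $\psi(t)/t\to f(0)/g(0)\ge 1$ as $t\downarrow 0$ gives $\psi(x)\ge x$, and then $\psi(y)\ge\psi(x)+\psi(y-x)\ge\psi(x)+(y-x)$ yields exactly the monotonicity of $G^{-1}F(x)-x$ that defines $\le_{disp}$ in this paper. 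The one point you rightly flag as delicate, the convergence of $\psi(t)/t$ at the origin, is not a real gap here: it only requires that $F$ and $G$ be right-differentiable at $0$ with derivatives $f(0)$ and $g(0)>0$, which is implicit in the lemma's hypothesis that these values are meaningful (writing $\psi(t)/t=\bigl(G^{-1}(F(t))/F(t)\bigr)\cdot\bigl(F(t)/t\bigr)$ makes the limit immediate, and note you only need $\liminf_{t\downarrow 0}\psi(t)/t\ge 1$, not the full limit, so the case $f(0)=g(0)$ causes no extra trouble). In short, your argument is correct and fills in a proof the paper deliberately omits; what it buys is independence from the external reference, at the cost of assuming the mild regularity at $0$ that the original source also assumes.
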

		
		One may refer Shaked and Shantikumar (2007) for details of convex transform order ($\leq_c$), star order ($\leq_{\star}$), super additive order ($\leq_{su}$), and dispersive order ($\leq_{disp}$). In view of Theorem \ref{thm com rss1}  and Lemma \ref{lemma1}, the following result is obtained.
		\begin{theorem}
			Let $X$  and $Y$ be nonnegative random variables with pdf's f and g, cdf's F and G, respectively having $u_X=u_Y<\infty$.\\
			(a)  If $w_1$ is increasing, $w_1(x)\geq w_2(x)$ and $X\le_{su} Y$ (or $X\le_{*}Y$ or $X\le_{c}Y$), then $J^{w_1}(\textbf{X}_{RSS}^{(n)})\le J^{w_2}(\textbf{Y}_{RSS}^{(n)})$.\\
			(b)   If $w_1$ is increasing, $w_1(x)\leq w_2(x)$ and $X\ge_{su} Y$  (or $X\ge_{*}Y$ or $X\ge_{c}Y$), then $J^{w_1}(\textbf{X}_{RSS}^{(n)})\ge J^{w_2}(\textbf{Y}_{RSS}^{(n)})$.
		\end{theorem}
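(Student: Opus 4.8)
The plan is to obtain this theorem as an immediate consequence of Theorem \ref{thm com rss1} together with Lemma \ref{lemma1}, since the latter converts each of the orders $\le_{su}$, $\le_{*}$, $\le_{c}$ into the dispersive order that already drives Theorem \ref{thm com rss1}. No new computation with the integral representation \eqref{RSS1} is needed; the work is entirely in chaining the two cited results in the correct direction.

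For part (a), I would first observe that $X\le_{su}Y$ (respectively $X\le_{*}Y$ or $X\le_{c}Y$), combined with the density condition $f(0)\ge g(0)>0$ required by Lemma \ref{lemma1}, yields $X\le_{disp}Y$. Having $w_1$ increasing and $w_1(x)\ge w_2(x)$ in hand, Theorem \ref{thm com rss1}(a) then applies verbatim and gives $J^{w_1}(\textbf{X}_{RSS}^{(n)})\le J^{w_2}(\textbf{Y}_{RSS}^{(n)})$, which is the claim.

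For part (b), the argument is the mirror image: $X\ge_{su}Y$ means $Y\le_{su}X$, so Lemma \ref{lemma1} (now with the roles of $f$ and $g$ exchanged, i.e.\ $g(0)\ge f(0)>0$) gives $Y\le_{disp}X$, that is $X\ge_{disp}Y$; together with $w_1$ increasing and $w_1(x)\le w_2(x)$, Theorem \ref{thm com rss1}(b) delivers $J^{w_1}(\textbf{X}_{RSS}^{(n)})\ge J^{w_2}(\textbf{Y}_{RSS}^{(n)})$. The same reduction works identically for the $\le_{*}$ and $\le_{c}$ variants, since Lemma \ref{lemma1} treats all three orders on the same footing.

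Since both parts collapse to a one-line invocation of previously established results, I do not expect any genuine obstacle. The only points that require care are bookkeeping the hypotheses of Lemma \ref{lemma1} — in particular the ordering of the densities at the origin, which underlies the implication into $\le_{disp}$ — and checking that the direction of the dispersive comparison matches the sign convention used in Theorem \ref{thm com rss1}. It would also be worth remarking that the conditions on $f(0)$ and $g(0)$ ought to appear explicitly among the hypotheses of the theorem (or be stated as standing assumptions), since they are silently inherited from Lemma \ref{lemma1}.
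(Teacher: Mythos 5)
Your proposal is correct and follows exactly the route the paper intends: the paper states this theorem as an immediate consequence of Theorem \ref{thm com rss1} combined with Lemma \ref{lemma1}, which is precisely your chaining argument. Your observation that the hypothesis $f(0)\ge g(0)>0$ from Lemma \ref{lemma1} is silently inherited and ought to be stated explicitly is a fair and worthwhile remark about the paper's presentation.
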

		
		If we take $w_1(x)=w_2(x)$ in above theorem, then we have following corollary.
		\begin{corollary}\label{cor com rss2}
			Let $X$  and $Y$ be nonnegative random variables with pdf's f and g, cdf's F and G, respectively having $u_X=u_Y<\infty$ and  $w_1$ is increasing.\\
			(a) If $X\le_{su} Y$ (or $X\le_{*}Y$ or $X\le_{c}Y$), then $J^{w_1}(\textbf{X}_{RSS}^{(n)})\le J^{w_1}(\textbf{Y}_{RSS}^{(n)})$.\\
			(b)  If $X\ge_{su} Y$  (or $X\ge_{*}Y$ or $X\ge_{c}Y$), then $J^{w_1}(\textbf{X}_{RSS}^{(n)})\ge J^{w_1}(\textbf{Y}_{RSS}^{(n)})$
		\end{corollary}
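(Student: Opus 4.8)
The plan is to obtain Corollary~\ref{cor com rss2} as the immediate specialization, to the case of a single weight function, of the theorem that precedes it (the one deduced from Theorem~\ref{thm com rss1} and Lemma~\ref{lemma1}). Concretely, I would simply take $w_2 \equiv w_1$ in that theorem and observe that its weight hypotheses become vacuous.

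For part (a): with $w_2 = w_1$, the requirement ``$w_1(x) \ge w_2(x)$'' of the preceding theorem reduces to the trivial inequality $w_1(x) \ge w_1(x)$, valid for every $x$. Hence, under the standing assumptions ($X,Y$ nonnegative, $u_X = u_Y < \infty$, $w_1$ increasing) together with $X \le_{su} Y$ (or $X \le_{*} Y$ or $X \le_{c} Y$), part (a) of that theorem gives $J^{w_1}(\textbf{X}_{RSS}^{(n)}) \le J^{w_2}(\textbf{Y}_{RSS}^{(n)}) = J^{w_1}(\textbf{Y}_{RSS}^{(n)})$, which is exactly the asserted inequality. Part (b) is symmetric: taking $w_2 = w_1$ turns ``$w_1(x) \le w_2(x)$'' into $w_1(x) \le w_1(x)$, and part (b) of the preceding theorem then yields $J^{w_1}(\textbf{X}_{RSS}^{(n)}) \ge J^{w_2}(\textbf{Y}_{RSS}^{(n)}) = J^{w_1}(\textbf{Y}_{RSS}^{(n)})$.

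I do not expect any substantive obstacle: the corollary is a pure restatement of the case $w_1 = w_2$, so the ``proof'' is one line. If one prefers to bypass the preceding theorem, an equally short alternative is available: by Lemma~\ref{lemma1} each of $X \le_{su} Y$, $X \le_{*} Y$, $X \le_{c} Y$ implies $X \le_{disp} Y$, and then Corollary~\ref{cor com rss1}, applied with the single increasing weight $w_1$, delivers the conclusion directly; the reversed orderings give part (b) in the same way. Either route is routine.
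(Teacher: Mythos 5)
Your proposal is correct and matches the paper exactly: the paper derives this corollary by setting $w_1(x)=w_2(x)$ in the immediately preceding theorem, which is precisely your main argument. Your alternative route via Lemma~\ref{lemma1} and Corollary~\ref{cor com rss1} is also valid and amounts to the same underlying reasoning.
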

		
		\begin{remark}
			For $w_1(x)=x$, the result in Corollary  \ref{cor com rss2} was proved by Qiu and Raqab (2022).
		\end{remark}
		
		One may ask that whether the condition $X\le_{disp}Y$ in Theorem \ref{thm com rss1}  may be relaxed by $J^{w_1}(X)\leq J^{w_2}(Y)$. The following result give positive answer to this assertion.
		
		\begin{theorem}\label{thm com rss3}
			Let $X$  and $Y$ be nonnegative random variables with pdf's f and g, cdf's F and G. Let $\Delta (u)=w_1(F^{-1}(u))f(F^{-1}(u))-w_2(G^{-1}(u))g(G^{-1}(u))$,
			\[A_1=\{0\le u \le 1|\Delta (u)>0\},\ A_2=\{0\le u \le 1|\Delta (u)<0\}.\]
			If $ \inf_{A_1}\phi_{2i-1:2n-2i}(u)\geq  \sup_{A_2}\phi_{2i-1:2n-2i}(u)$, and if $J^{w_1}(X)\leq J^{w_2}(Y)$, then  $J^{w_1}(\textbf{X}_{RSS}^{(n)})$ $\le J^{w_2}(\textbf{Y}_{RSS}^{(n)})$
		\end{theorem}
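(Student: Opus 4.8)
The plan is to reduce the claim to a family of one-dimensional moment comparisons, one for each index $i$. Using the representation (\ref{RSS1}),
\[
J^{w_1}(\textbf{X}_{RSS}^{(n)})=-\frac{Q_n}{2}\prod_{i=1}^{n}\int_{0}^{1} \Lambda_X^{w_1}(u)\,\phi_{2i-1:2n-1}(u)\,du,
\]
and similarly for $Y$ with $w_2$. Since $w_1,w_2\ge 0$ and $f,g\ge 0$, each factor $\int_{0}^{1}\Lambda_X^{w_1}(u)\phi_{2i-1:2n-1}(u)\,du$ and $\int_{0}^{1}\Lambda_Y^{w_2}(u)\phi_{2i-1:2n-1}(u)\,du$ is nonnegative, and $Q_n>0$. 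Hence it suffices to prove, for every $i\in\{1,\dots,n\}$, that
\[
\int_{0}^{1} \Delta(u)\,\phi_{2i-1:2n-1}(u)\,du\ \ge\ 0 ;
\]
granting this, the product of the $X$-factors dominates the product of the $Y$-factors, and multiplying by $-Q_n/2<0$ reverses the inequality to give $J^{w_1}(\textbf{X}_{RSS}^{(n)})\le J^{w_2}(\textbf{Y}_{RSS}^{(n)})$.

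To establish the displayed inequality, first I would rephrase the hypothesis $J^{w_1}(X)\le J^{w_2}(Y)$: since $J^{w_1}(X)=-\tfrac12\int_{0}^{1}\Lambda_X^{w_1}(u)\,du$ and $J^{w_2}(Y)=-\tfrac12\int_{0}^{1}\Lambda_Y^{w_2}(u)\,du$, this is precisely $\int_{0}^{1}\Delta(u)\,du\ge 0$. Now split $\int_{0}^{1}\Delta\,\phi_{2i-1:2n-1}\,du=\int_{A_1}\Delta\,\phi_{2i-1:2n-1}\,du+\int_{A_2}\Delta\,\phi_{2i-1:2n-1}\,du$ (the set where $\Delta=0$ contributes nothing). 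Put $c=\inf_{A_1}\phi_{2i-1:2n-1}$ and $d=\sup_{A_2}\phi_{2i-1:2n-1}$. On $A_1$ one has $\Delta(u)>0$, so $\Delta(u)\phi_{2i-1:2n-1}(u)\ge c\,\Delta(u)$; on $A_2$ one has $\Delta(u)<0$, so $\Delta(u)\phi_{2i-1:2n-1}(u)\ge d\,\Delta(u)$. Therefore
\[
\int_{0}^{1}\Delta\,\phi_{2i-1:2n-1}\,du\ \ge\ c\int_{A_1}\Delta\,du+d\int_{A_2}\Delta\,du .
\]
Since $\int_{A_2}\Delta\,du\le 0$ and $d\le c$ by hypothesis, we have $d\int_{A_2}\Delta\,du\ge c\int_{A_2}\Delta\,du$, so the right-hand side is at least $c\int_{0}^{1}\Delta\,du\ge 0$, using $c\ge 0$ (a beta density is nonnegative) and $\int_{0}^{1}\Delta\,du\ge 0$. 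This proves the claim.

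The remaining points are bookkeeping only. One must treat the degenerate cases: if $A_1$ is null then $\Delta\le 0$ a.e., which together with $\int_{0}^{1}\Delta\,du\ge 0$ forces $\Delta=0$ a.e. and the conclusion is immediate; if $A_2$ is null the estimate $\int_{A_1}\Delta\,\phi_{2i-1:2n-1}\,du\ge c\int_{A_1}\Delta\,du\ge 0$ applies directly. One should also record explicitly that it is the factor $-\tfrac12$ in the definition of the general weighted extropy that turns $J^{w_1}(X)\le J^{w_2}(Y)$ into the reversed integral inequality and, likewise, flips the final product inequality into the desired ordering of the RSS extropies. I do not anticipate any genuine obstacle beyond these sign and measure-zero checks; the argument is a weighted-average comparison in the same spirit as the proof of Theorem 2.8 of Qiu and Raqab (2022).
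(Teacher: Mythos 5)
Your proof is correct and follows essentially the same route as the paper: rewrite the hypothesis $J^{w_1}(X)\le J^{w_2}(Y)$ as $\int_0^1\Delta(u)\,du\ge 0$, split $\int_0^1\Delta\,\phi_{2i-1:2n-1}\,du$ over $A_1$ and $A_2$, use $\inf_{A_1}\phi\ge\sup_{A_2}\phi$ to reduce to $\int_0^1\Delta\,du\ge 0$, and then conclude factor-by-factor via the product representation (\ref{RSS1}). The only differences are cosmetic (you bound both pieces by $\inf_{A_1}\phi$ while the paper uses $\sup_{A_2}\phi$, and you spell out the nonnegativity of the factors and the degenerate cases, which the paper leaves implicit), and you correctly read the beta density as $\phi_{2i-1:2n-1}$ rather than the paper's typographical $\phi_{2i-1:2n-2i}$.
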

		\begin{proof}
			Since $J^{w_1}(X)\leq J^{w_2}(Y)$, we have
			\begin{equation}\label{delta}
				\int_{0}^{1}\Delta (u)du=\int_{0}^{1}\left[w_1(F^{-1}(u))f(F^{-1}(u))-w_2(G^{-1}(u))g(G^{-1}(u))\right]du\geq 0.
			\end{equation}
			Now for $1\le i\le n,$ we have
			\begin{align*}
				&\int_{0}^{1}w_1(F^{-1}(u))f(F^{-1}(u))\phi_{2i-1:2n-2i}(u)du \\
&\ \ \ \ \ \ \ \ \ \ \ \ \ -\int_{0}^{1}w_2(G^{-1}(u))g(G^{-1}(u))\phi_{2i-1:2n-2i}(u)du\\
				&=\int_{0}^{1}\Delta (u)\phi_{2i-1:2n-2i}(u)du\\
				&=\int_{A_1}\Delta (u)\phi_{2i-1:2n-2i}(u)du+\int_{A_2}\Delta (u)\phi_{2i-1:2n-2i}(u)du\\
				&\geq  \inf_{A_1}\phi_{2i-1:2n-2i}(u)\int_{A_1}\Delta (u)du+\sup_{A_2}\phi_{2i-1:2n-2i}(u)\int_{A_2}\Delta (u)du\\
				&\geq \sup_{A_2}\phi_{2i-1:2n-2i}(u)\int_{A_1}\Delta (u)du+\sup_{A_2}\phi_{2i-1:2n-2i}(u)\int_{A_2}\Delta (u)du\\
				&\geq \sup_{A_2}\phi_{2i-1:2n-2i}(u)\int_{0}^{1}\Delta (u)du\\
				&\geq 0,
			\end{align*}
			where the inequalities follows by using the assumption that  \[ \inf_{A_1}\phi_{2i-1:2n-2i}(u)\geq  \sup_{A_2}\phi_{2i-1:2n-2i}(u)\] and the equation (\ref{delta}). Now using 
			equation (\ref{RSS1}) and above ineqaulity result follows.\hfill $\blacksquare$ 
		\end{proof}
		
		If we take $w_1(x)=w_2(x)$ in above theorem, then we have following corollary.
		\begin{corollary}\label{cor com rss3}
			Let $X$  and $Y$ be nonnegative random variables with pdf's f and g, cdf's F and G. Let $\Delta (u)=w_1(F^{-1}(u))f(F^{-1}(u))-w_1(G^{-1}(u))g(G^{-1}(u))$,
			\[A_1=\{0\le u \le 1|\Delta (u)>0\},\ A_2=\{0\le u \le 1|\Delta (u)<0\}.\]
			If  $\ \inf_{A_1}\phi_{2i-1:2n-2i}(u)\geq  \sup_{A_2}\phi_{2i-1:2n-2i}(u)$, and if $J^{w_1}(X)\leq J^{w_1}(Y)$, then  $J^{w_1}(\textbf{X}_{RSS}^{(n)})$ $\le J^{w_1}(\textbf{Y}_{RSS}^{(n)})$
		\end{corollary}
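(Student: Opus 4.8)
The statement is precisely Theorem~\ref{thm com rss3} in the special case $w_2\equiv w_1$, so the plan is simply to invoke that theorem. Under the substitution $w_2=w_1$ the function $\Delta(u)=w_1(F^{-1}(u))f(F^{-1}(u))-w_2(G^{-1}(u))g(G^{-1}(u))$ of Theorem~\ref{thm com rss3} coincides with the $\Delta(u)$ defined in the corollary, the sets $A_1$ and $A_2$ are literally the same, the hypothesis $\inf_{A_1}\phi_{2i-1:2n-2i}(u)\ge \sup_{A_2}\phi_{2i-1:2n-2i}(u)$ is unchanged, and the assumption $J^{w_1}(X)\le J^{w_2}(Y)$ becomes $J^{w_1}(X)\le J^{w_1}(Y)$; the conclusion then reads $J^{w_1}(\textbf{X}_{RSS}^{(n)})\le J^{w_1}(\textbf{Y}_{RSS}^{(n)})$, which is exactly what is asserted. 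Hence one line suffices: apply Theorem~\ref{thm com rss3} with $w_2=w_1$.

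If a self-contained argument is wanted, I would replay the short computation from the proof of Theorem~\ref{thm com rss3}. First, from $J^{w_1}(X)\le J^{w_1}(Y)$ and the integral representation $J^{w_1}(X)=-\tfrac12\int_0^1 \Lambda_X^{w_1}(u)\,du$ I would deduce $\int_0^1\Delta(u)\,du\ge 0$. Next, for each fixed $i$ with $1\le i\le n$, I would write $\int_0^1\Delta(u)\phi_{2i-1:2n-2i}(u)\,du=\int_{A_1}\Delta\,\phi_{2i-1:2n-2i}+\int_{A_2}\Delta\,\phi_{2i-1:2n-2i}$, bound the first term below by $(\inf_{A_1}\phi_{2i-1:2n-2i})\int_{A_1}\Delta$ and the second below by $(\sup_{A_2}\phi_{2i-1:2n-2i})\int_{A_2}\Delta$ (using $\Delta\ge 0$ on $A_1$ and $\Delta<0$ on $A_2$), and then use $\inf_{A_1}\phi_{2i-1:2n-2i}\ge\sup_{A_2}\phi_{2i-1:2n-2i}$ together with $\int_0^1\Delta\ge 0$ to obtain $E\!\left(\Lambda_X^{w_1}(B_{2i-1:2n-1})\right)\ge E\!\left(\Lambda_Y^{w_1}(B_{2i-1:2n-1})\right)\ge 0$ for every $i$.

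Finally, I would combine the $n$ factorwise inequalities via representation~(\ref{RSS1}). Since $w_1\ge 0$, every $\Lambda$ and hence every expectation $E(\Lambda_\cdot^{w_1}(B_{2i-1:2n-1}))$ is nonnegative, so the products satisfy $\prod_{i=1}^n E(\Lambda_X^{w_1}(B_{2i-1:2n-1}))\ge \prod_{i=1}^n E(\Lambda_Y^{w_1}(B_{2i-1:2n-1}))$; multiplying through by the negative constant $-Q_n/2$ reverses the inequality and yields $J^{w_1}(\textbf{X}_{RSS}^{(n)})\le J^{w_1}(\textbf{Y}_{RSS}^{(n)})$. There is no real obstacle here; the only things to watch are the sign reversal produced by the factor $-Q_n/2$ and the nonnegativity of the factors, which is precisely what makes the term-by-term multiplication of the $n$ inequalities legitimate.
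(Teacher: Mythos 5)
Your proposal is correct and matches the paper exactly: the paper obtains this corollary by the one-line specialization $w_2=w_1$ in Theorem~\ref{thm com rss3}, which is precisely your main argument. Your optional self-contained replay is also faithful to the paper's proof of that theorem, and is in fact slightly more careful in noting that the nonnegativity of the factors $E\bigl(\Lambda^{w_1}_{\cdot}(B_{2i-1:2n-1})\bigr)$ (from $w_1\ge 0$) is what justifies multiplying the $n$ factorwise inequalities before the sign flip from $-Q_n/2$.
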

		
		\begin{remark}
			For $w_1(x)=x$, the result in Corollary  \ref{cor com rss3} was proved by Qiu and Raqab (2022).
		\end{remark}

		\section{Monotone properties}
		Here, we  provide a propostion having the monotone properties between Ranked Set Sampling's elements. The proof of the proposition is similar to Proposition 5.1 given in Qiu and Raqab (2022), hence omitted.
		\begin{proposition}
			Let $X$ be a random variable with pdf f and cdf F. If $\Lambda_X^{w_1}$ is decreasing, then
			\[J^{w_1}(X_{(1:n)1})\leq J^{w_1}(X_{(2:n)2})\le \ldots\le J^{w_1}(X_{(m:n)m});\]
			and if $\Lambda_X^w$ is increasing, then
			\[J^{w_1}(X_{(m:n)m})\leq J^{w_1}(X_{(m+1:n)m+1})\le \ldots\le J^{w_1}(X_{(n:n)n}),\]
			where $m=\left[\frac{n}{2}\right]$ the least integer greater than or equal to $\frac{n}{2}$ and $\left[ . \right]$ is the ceiling function such that $[x]$ maps $x$ to the least integer greater than or equal to $x$.
		\end{proposition}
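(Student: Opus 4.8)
The plan is to reduce both displayed chains to a comparison of two consecutive terms $J^{w_1}(X_{(i:n)i})$ and $J^{w_1}(X_{(i+1:n)i+1})$, and to carry out that comparison through the single‑factor form of $(\ref{RSS1})$. Reading off the $i$‑th factor of the product in $(\ref{RSS1})$ gives, for $1\le i\le n$,
\[
J^{w_1}(X_{(i:n)i})=-\frac{n c_{i,n}}{2}\, E\left(\Lambda_X^{w_1}(B_{2i-1:2n-1})\right),
\]
and, since $w_1\ge 0$ and $f\ge 0$ force $\Lambda_X^{w_1}\ge 0$ on $(0,1)$, every $J^{w_1}(X_{(i:n)i})$ is nonpositive. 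Hence $J^{w_1}(X_{(i:n)i})\le J^{w_1}(X_{(i+1:n)i+1})$ is equivalent to
\[
c_{i,n}\, E\left(\Lambda_X^{w_1}(B_{2i-1:2n-1})\right)\ \ge\ c_{i+1,n}\, E\left(\Lambda_X^{w_1}(B_{2i+1:2n-1})\right),
\]
an inequality between products of nonnegative quantities, which I would split into a comparison of the combinatorial weights $c_{i,n}$ and a comparison of the two beta expectations.

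For the weights, an elementary factorial cancellation gives
\[
\frac{c_{i,n}}{c_{i+1,n}}=\frac{i(2n-2i-1)}{(2i-1)(n-i)},
\]
and since $i(2n-2i-1)-(2i-1)(n-i)=n-2i$, this ratio exceeds $1$ exactly when $2i<n$ and is smaller than $1$ exactly when $2i>n$. With $m=\lceil n/2\rceil$ this means $c_{i,n}\ge c_{i+1,n}$ throughout the range $1\le i\le m-1$ relevant to the first chain, while $c_{i,n}\le c_{i+1,n}$ throughout the range $m\le i\le n-1$ relevant to the second chain. This monotone‑then‑monotone (U‑shaped) behaviour of $i\mapsto c_{i,n}$, with turning point at $n/2$, is precisely why the monotone behaviour of $J^{w_1}(X_{(i:n)i})$ can only be asserted on the two halves separately and why the cut‑off point is the ceiling of $n/2$.

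For the expectations, $B_{2i-1:2n-1}$ and $B_{2i+1:2n-1}$ are, respectively, the $(2i-1)$‑th and $(2i+1)$‑th order statistics of the same number $2n-1$ of independent Uniform$(0,1)$ variables, so trivially $B_{2i-1:2n-1}\le_{st}B_{2i+1:2n-1}$; hence $E(\Lambda_X^{w_1}(B_{2i-1:2n-1}))\ge E(\Lambda_X^{w_1}(B_{2i+1:2n-1}))$ when $\Lambda_X^{w_1}$ is decreasing, and the reverse when $\Lambda_X^{w_1}$ is increasing. Combining for the first chain: if $\Lambda_X^{w_1}$ is decreasing and $1\le i\le m-1$, then $c_{i,n}\ge c_{i+1,n}\ge 0$ and $E(\Lambda_X^{w_1}(B_{2i-1:2n-1}))\ge E(\Lambda_X^{w_1}(B_{2i+1:2n-1}))\ge 0$, so the product inequality above holds and therefore $J^{w_1}(X_{(i:n)i})\le J^{w_1}(X_{(i+1:n)i+1})$; chaining over $i=1,\ldots,m-1$ produces the first string. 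The remaining monotonicity over $m\le i\le n$ is obtained by the same argument, now using the increasing hypothesis on $\Lambda_X^{w_1}$ together with the reversed weight inequality $c_{i,n}\le c_{i+1,n}$ valid on that range.

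The step that needs care is this last bookkeeping: because every $J^{w_1}(X_{(i:n)i})$ is $\le 0$, an inequality between products of nonnegative numbers has to be translated correctly into an ordering of the (nonpositive) extropies, and one must also check that the weight inequality and the stochastic comparison of the beta order statistics point the same way throughout each half‑range — which is exactly the role played by the value $m=\lceil n/2\rceil$. The remaining ingredients, namely the factorial identity for $c_{i,n}/c_{i+1,n}$ and the stochastic ordering $B_{2i-1:2n-1}\le_{st}B_{2i+1:2n-1}$, are routine, and the whole scheme parallels the proof of Proposition 5.1 of Qiu and Raqab (2022), with the general weight $w_1$ in place of the identity weight.
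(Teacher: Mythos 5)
Your overall strategy --- the single-factor identity $J^{w_1}(X_{(i:n)i})=-\tfrac{n c_{i,n}}{2}E\left(\Lambda_X^{w_1}(B_{2i-1:2n-1})\right)$, the ratio $c_{i,n}/c_{i+1,n}=\tfrac{i(2n-2i-1)}{(2i-1)(n-i)}$ whose numerator exceeds its denominator by exactly $n-2i$, and the comparison $B_{2i-1:2n-1}\le_{st}B_{2i+1:2n-1}$ combined with the nonnegativity of $\Lambda_X^{w_1}$ --- is exactly the intended route (the paper omits the proof, deferring to Proposition 5.1 of Qiu and Raqab (2022)), and your treatment of the first chain is complete and correct.

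The problem is the final sentence, where you dispatch the second chain ``by the same argument.'' If you actually run that argument, for $m\le i\le n-1$ you have $0\le c_{i,n}\le c_{i+1,n}$ and, since $\Lambda_X^{w_1}$ is increasing, $0\le E\left(\Lambda_X^{w_1}(B_{2i-1:2n-1})\right)\le E\left(\Lambda_X^{w_1}(B_{2i+1:2n-1})\right)$; multiplying these gives $c_{i,n}E\left(\Lambda_X^{w_1}(B_{2i-1:2n-1})\right)\le c_{i+1,n}E\left(\Lambda_X^{w_1}(B_{2i+1:2n-1})\right)$ and hence $J^{w_1}(X_{(i:n)i})\ge J^{w_1}(X_{(i+1:n)i+1})$ --- the \emph{opposite} of the displayed second chain. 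So your machinery proves $J^{w_1}(X_{(m:n)m})\ge\cdots\ge J^{w_1}(X_{(n:n)n})$, not the stated $\le$ string, and the statement as printed cannot be rescued: for $X$ uniform on $(0,1)$ with $w_1\equiv 1$ (so $\Lambda_X^{w_1}\equiv 1$ is increasing) and $n=3$, $m=2$, one computes $J^{w_1}(X_{(2:3)2})=-3/5$ while $J^{w_1}(X_{(3:3)3})=-9/10$, so $J^{w_1}(X_{(2:3)2})>J^{w_1}(X_{(3:3)3})$. The correct conclusion in the increasing case is the reversed chain (as in Qiu and Raqab's original proposition); you needed either to derive that and flag the discrepancy with the statement, or to notice that ``the same argument'' does not deliver the inequality you asserted.
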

		If we take $w_1(x)=x$ in above  proposition, we obtain  Proposition 5.1 of  Qiu and Raqab (2022)\\
		\\
		\textbf{ \Large Funding} \\
		\\
		Santosh Kumar Chaudhary would like to acknowledge financial support from the Council of Scientific and Industrial Research (CSIR) ( File Number 09/0081(14002) /2022-
		EMR-I ), Government of India. \\
		\\
		\textbf{ \Large Conflict of interest} \\
		\\
		No conflicts of interest are disclosed by the authors.

		\vspace{.3in}
		
		\noindent
		{\bf  Nitin Gupta} \\
		Department of Mathematics,\\
		Indian Institute of Technology Kharagpur\\
		Kharagpur-721302, INDIA\\
		E-mail: nitin.gupta@maths.iitkgp.ac.in
		
		\vspace{.1in}
		\noindent
		{\bf Santosh Kumar Chaudhary}\\
		Department of Mathematics,\\
		Indian Institute of Technology Kharagpur\\
		Kharagpur-721302, INDIA\\
		E-mail: skchaudhary1994@kgpian.iitkgp.ac.in\\

	\end{document}